\let\oldfootnote\footnote
\def\footnote{\@ifstar\footnote@star\footnote@nostar}
\def\footnote@star#1{{\let\thefootnote\relax\footnotetext{#1}}}
\def\footnote@nostar{\oldfootnote}
\providecommand{\algorithmname}{Algorithm}
\newcounter{algo}
\renewcommand{\d}{\mathbf{d}}
\renewcommand{\Re}{\mathbb{R}}
\newcommand{\trt}{^{\scriptscriptstyle T}}
\newcommand{\bx}{{\mathbf{x}}}
\newcommand{\by}{{\mathbf{y}}}
\newcommand{\argmin}{\mathop{\rm argmin}}
\newcommand{\Z}{\mathbf{Z}}
\newcommand{\E}{\mathbb{E}}
\begin{document}

%%%%%%%%%

\title{Asynchronous Parallel  Algorithms for Nonconvex Optimization}

\author{Loris Cannelli \and Francisco Facchinei  \and  Vyacheslav Kungurtsev \and Gesualdo Scutari}

\institute{ Loris Cannelli and Gesualdo Scutari,
School of Industrial Engineering, Purdue University, USA; \email{$<$lcannell, gscutari$>$@purdue.edu} \and
Francisco Facchinei, Department of Computer, Control, and Management Engineering Antonio Ruberti, University of Rome La Sapienza,  Roma, Italy;  \email{francisco.facchinei@uniroma1.it}\and Vyacheslav Kungurtsev
Dept. of Computer Science, Faculty of Electrical Engineering, Czech Technical University in Prague, Czech
 \email{vyacheslav.kungurtsev@fel.cvut.cz}.  The work of Cannelli and Scutari was supported by the USA NSF under Grants CIF 1564044,   CAREER Award No. 1555850, and CIF 1719205; and the Office of Naval Research (ONR) Grant N00014-16-1-2244. Facchinei was  partially supported by the Italian Ministry of Education, Research and University, under
the PLATINO (PLATform for INnOvative services in future internet) PON project,  Grant
Agreement no. PON01$\_$01007.
 Kungurtsev was supported by the Czech Science Foundation project 17-26999S.
 \newline \indent Part of this work has been presented to the  \emph{50th Asilomar Conference on Signals, Systems, and Computers}   \cite{cannelli2016parallel} and the \emph{42nd IEEE International Conference on Acoustics, Speech, and Signal Processing} \cite{cannelli2017asynchronous}. A two-part  preliminary technical report was posted on arxiv  on July 2016 \cite{companion2} and January 2017 \cite{companion}.
}

\date{\today}
%\date{19th March 2014}
% The correct dates will be entered by the editor

\maketitle
\begin{abstract}
 We propose a   new
 asynchronous parallel block-descent algorithmic framework  for the minimization of the sum of a
 smooth nonconvex function and a   {nonsmooth convex}  one, subject to both convex and nonconvex constraints.
  The proposed  framework hinges on successive convex approximation techniques and a novel probabilistic model that  captures key elements of  modern computational architectures  and asynchronous implementations in a more faithful way than  current state-of-the-art models.
 Other key features of the framework
 are: i)  it covers in a unified way several  specific solution methods;  ii) it accommodates a variety of possible parallel computing architectures; and iii) it can deal with nonconvex constraints.
Almost sure convergence to stationary solutions  is proved, and theoretical complexity results are provided, showing  nearly ideal linear speedup when the number of workers is not too large.

 \keywords{ Asynchronous algorithms   \and nonconvex constraints \and  parallel methods \and
 probabilistic model.}

  \end{abstract}
\section{Introduction}\label{sec:intro}\vspace{-0.2cm}
%%%%%

We  study asynchronous parallel  block-descent methods for the following class of nonconvex nonsmooth minimization problems: \vspace{-0.2cm}
\begin{equation}
\begin{array}{rl}
\underset{\mathbf{x}\triangleq (\mathbf{x}_1,\ldots, \mathbf{x}_N)}{\min} & F(\mathbf{x}) \triangleq f(\mathbf{x}) + \sum\limits_{i=1}^N g_i(\bx_i)\\[0.3em]
&\mathbf{x}_i \in \mathcal X_i, \qquad i = 1, \dots , N,
\end{array}\tag{P}\vspace{-0.1cm}
\label{eq:optimization_problem}
\end{equation}
where  $f$ is a  {smooth, possibly nonconvex function,   $g_i$ are  possibly nonsmooth, convex functions, and $\mathcal{X}_i\subseteq \Re^{n_i}$ is a  closed, possibly nonconvex  set. }

 Instances of Problem \eqref{eq:optimization_problem}  arise in
many fields, including compressed sensing,      machine learning, data mining,  and genomics,
just to name a few.
Typically, in data-driven applications $f$ might measure the misfit between the observations and the postulated model, parametrized on $\bx$, while  the regularizers $g_i$ encode structural constraints on the solution, such as sparsity.

Many of the aforementioned applications give rise to extremely
large-scale problems, which naturally call  for  \emph{asynchronous, parallel} solution methods. In fact,    well suited to
modern
computational architectures, asynchronous methods  reduce the idle times of workers,
mitigate  communication and/or memory-access congestion, and make
 algorithms
more fault-tolerant.
In this paper,  we introduce a general  asynchronous block-descent  algorithm  for finding stationary solutions of Problem \eqref{eq:optimization_problem}.

We consider a generic multi-worker architecture (e.g., shared memory system, message passing-based system,  cluster computer, cloud federation) wherein multiple workers,  %(e.g.,  a core,  cpu, or machine)
continuously
and without coordination with each other, update a block-variable   by solving a strongly convex block-model of  Problem \eqref{eq:optimization_problem}. More specifically, at iteration $k$, a worker  updates a block-variable $\bx^k_{i^k}$ of $\mathbf{x}^k$ to $\bx^{k+1}_{i^k}$, with $i^k$   in the set $\mathcal N\triangleq \{1,\ldots, N\}$, thus generating the  vector $\bx^{k+1}$.
When updating   block $i^k$, in general, the worker does not have   access to the current vector $\bx^k$, but it will   use instead the
  local estimate
$\bx^{k-\mathbf{d}^k} \triangleq (x_1^{k-d_1^k}, x_2^{k-d^k_2}, \ldots, x_N^{k-d^k_N})$, where  $\mathbf{d}^k \triangleq (d_1^k, d_2^k, \ldots, d_N^k)$ is  the
``vector of
delays'', whose components $d_i^k$ are
 nonnegative integers. Note that $\bx^{k-
\mathbf{d}^k}$ is
nothing else but a   combination  of delayed,
  block-variables.
The way each worker forms its own estimate $\bx^{k-\mathbf{d}^k}$ depends on the particular
  architecture under consideration
  and it is
immaterial to the  {analysis} of the algorithm.
We only observe here  that if all delays ${d}^k_i$ are zeros, the model reduces to a standard synchronous one. \newline
Given $\bx^{k-\mathbf{d}^k}$ and $i^k$,   block $\bx^k_{i^k}$ is updated
by  solving the following   {\em strongly
convex} block-approximation of  Problem~\eqref{eq:optimization_problem}: \vspace{-0.1cm}
\begin{equation}\label{eq:local model with ik}
\hat \bx_{i^k}(\bx^{k-\mathbf{d}^k})
\triangleq \argmin_{\bx_{i^k} \in \tilde{\mathcal{X}}_{i^k}(\bx^{k-\mathbf{d}^k})  }
 \tilde f_{i^k}(\bx_{i^k};
{\bx}^{k-\mathbf{d}^k}) + g_{i^k}(\bx_{i^k}),\vspace{-0.1cm}
\end{equation}
and then setting\vspace{-0.3cm}
\begin{equation}
\bx^{k+1}_{i^k} = \bx^{k}_{i^k}  + \gamma\, \left(\hat \bx_{i^k}(\bx^{k-\mathbf{d}^k})  -
\bx^{k}_{i^k}\right).
\label{update} \vspace{-0.1cm}
\end{equation}
In \eqref{eq:local model with ik},  $\tilde f_{i^k}$ and $\tilde{\mathcal{X}}_{i^k}$   represent a
strongly convex surrogate of $f$  and   a convex set obtained replacing the nonconvex functions
defining  $\mathcal{X}_{i^k}$  by suitably chosen  upper convex approximations, respectively;
both $\tilde f_{i^k}$ and $\tilde{\mathcal{X}}_{i^k}$  are built using the out-of-sync information $
\bx^{k-\mathbf{d}^k}$.  {If the set $\mathcal{X}_{i^k}$ is convex, then we will
always take $\tilde{\mathcal{X}}_{i^k}=\mathcal{X}_{i^k}$.}
In  (\ref{update}), $\gamma\in (0,1]$ is the stepsize. Note that, in the above asynchronous
model, the worker that is in charge of the computation \eqref{eq:local model with ik} and the
consequent update (\ref{update}) is immaterial.  %, to be properly chosen.
\smallskip

\noindent \textbf{Major contributions:}
 {Our main contributions are:}
%We list our main contributions as follows:
\smallskip

\noindent 1. {\em A new probabilistic model for asynchrony fixing some unresolved issues}:
Almost all modern asynchronous algorithms  for convex and nonconvex problems are modeled in
a probabilistic way.
We put forth a novel probabilistic model describing the statistics of $(i^k,\mathbf{d}^k)$  that differs markedly from existing ones.
This new model allows us not only to fix some important theoretical issues that mar most of the papers
in the field (see discussion below on related work),  but it also lets us   analyze for the first time in a
sound way   several  {practically used and effective} computing settings and  new
asynchronous  algorithms. For instance,   it is
widely accepted that  in shared-memory   systems, the best performance are  obtained by first
partitioning the  variables among cores, and then letting each core  update in an asynchronous fashion their own
block-variables, according to some randomized cyclic rule.
To the best of our knowledge,  this is the first work   proving convergence of such
practically effective methods in an asynchronous setting.
\smallskip

\noindent 2. {\em The ability to effectively deal with nonconvex constraints}: All the works in the literature but \cite{davis2016asynchronous,DavisEdmundsUdell} can deal only with unconstrained or convex constrained problems.
On the other hand, the algorithms in \cite{davis2016asynchronous,DavisEdmundsUdell}   require at each iteration the computation of the global optimal solution of nonconvex subproblems, which, except in few special cases, can be as difficult as solving the original nonconvex problem.
Our method is the first asynchronous method that allows one to deal (under adequate assumptions)
with nonconvex constraints while solving only  strongly convex subproblems.\smallskip

\noindent 3. {\em The possibility to leverage potentially complex, but effective subproblems
\eqref{eq:local model with ik}}:
 Asynchronous methods so far are all built around a proximal linearization method, which
corresponds, in our framework, to setting
\[
\tilde f_{i}(\bx_{i^k};{\bx}^{k-\mathbf{d}^k}) =  \nabla_{\bx_{i^k}} f\left({\bx}^{k-\mathbf{d}^k}\right)^\text{T}
\left(\bx_{i^k} -
\bx_{i^k}^{k-d_{i^k}^k}\right) + \beta \,\| \bx_{i^k} - \bx_{i^k}^{k-d_{i^k}^k}\|^2_2,
\]
for some   constant $\beta>0$.
This choice often leads to   efficient solution methods and, in some cases,
even to subproblems that admit  a solution in closed-form. For instance,  it has been shown to be very efficient on  composite
 quadratic problems, like  LASSO. However, moving to more nonlinear
problems,  one may want to use more complex/higher order models. In fact,  the more sophisticated the subproblem \eqref{eq:local model with ik}, the better the
overall  behavior of
the algorithm (at least in terms of iterations) is. This happens at the price of computationally more expensive
subproblems. But in asynchronous and distributed methods,    the bottleneck
is often given by the communication cost. In
these cases, it might be desirable  to reduce the communication overhead at the price of  more complex subproblems to solve.  Furthermore,   there are   many application for which one can define  subproblems
 that, while not being proximal linearizations, still admit closed-form solutions (see. e.g., \cite{scutari2014decomposition,daneshmand2015hybrid}). Overall, the ability to use more complex subproblems is an additional degree of freedom
that one may want to exploit  to improve the performance of the algorithm.\vspace{0.1cm}  \newline
\noindent 4. {\em Almost sure convergence and complexity analysis}: We prove i) almost sure convergence  to stationary solutions of   Problem \eqref{eq:optimization_problem}; and ii) convergence to  $\epsilon$-stationary solutions in an $\mathcal O(\epsilon^{-1})$  number of iterations.
We remark that our  convergence results match  similar ones in the literature \cite{liu2015asynchronous,liu2015asyspcd,davis2016asynchronous,DavisEdmundsUdell,peng2016arock}, which however were obtained in a simplified setting (e.g., only for unconstrained or convex constrained problems) and under  unrealistic probabilistic assumptions on the  pair index-delay $(i^k,\mathbf{d}^k)$ (see discussion on related work). Our analysis builds  on an induction technique based on   our  probabilistic model and a novel Lyapunov function that properly combines variable dynamics and their delayed versions. %By doing so, we do not treat asynchrony as noise, as instead some recent works do. This allowed us to obtain tighter bounds and  stronger convergence results. \smallskip    % to other available results of this type, but it should be remarked that it is obtained by using our new, more complex probabilistic model and algorithmic framework and, in particular, it is valid also for problems with nonconvex constraints.
 \vspace{0.01cm} \newline
\noindent 5. {\em A theoretical almost linear speed-up for a wide range of number of cores}: The holy grail of asynchronous methods is the ideal linear   speed-up (with respect to the number of workers). This theoretical limit is  not achievable in practice; in fact,    as the number of workers increases, the effective speedup is always limited by  associated overheads (communication costs, conflicts, etc.), which  make the linear  growth  impossible to achieve for arbitrarily large   number of workers.  %The question is then determining the maximum number of workers that can be used to still observe a linear speed-up.
By using  the number of iterations needed to achieve an $\epsilon$-stationary solution as a proxy for the computational time and leveraging  our new Lyapunov function, we are able to show     almost linear speed-up
in many settings of practical interest.
%\textcolor{blue}{Eliminerei le due frasi seguenti. La prima e' un po' una ripetizione, sembriamo un po petulanti. ma lascio a voi. La seconda non credo sia vera, alla fin fine.}
This is the first   theoretical result on speedup, based on a realistic probabilistic model for asynchrony (see discussion in  contribution 1). %We remark that the use of such a more realist model makes the analysis much more involved than that in the literature.

\noindent
{\bf Related work.} Although asynchronous block-methods have a long history (see, e.g.,
\cite{baudet1978asynchronous,chazan1969chaotic, Bertsekas_Book-Parallel-Comp, frommer2000asynchronous,tseng1991rate}), their revival and  probabilistic analysis
have taken place only  in recent years; this is mainly  due  to the
current trend towards huge scale
optimization and the availability
of  ever more complex computational architectures  that call for efficient and resilient  algorithms.
Indeed, asynchronous parallelism has been   applied to  many state-of-the-art optimization algorithms,
including stochastic gradient methods
\cite{nedic2001distributed,Hogwild!,lian2015asynchronous,huo2016asynchronous,
	Mania_et_al_stochastic_asy2016,leblond2017Asaga,Pedregosa2017breaking} and  ADMM-like schemes
\cite{hong2014distributed,wei20131,iutzeler2013asynchronous}.
Block-Coordinate Descent (BCD) methods  are part of the folklore in optimization;
more recently,   they
have been proven to be  particularly effective in solving
very large-scale problems arising, e.g.,  from  data-intensive applications. Their asynchronous counterpart has been introduced and studied in the seminal work
%If one considers asynchronous versions of this approach, the seminal paper is
\cite{liu2015asyspcd}, which  motivated and oriented much of subsequent research in the field, see e.g.
\cite{liu2015asynchronous,davis2016asynchronous,DavisEdmundsUdell,peng2016arock,peng2016convergence}.
We refer the interested reader to  %{\cite{shi2016primer,Wright2015} 
{\cite{Wright2015}  and references therein for a detailed   overview of BCD methods.
	There are several differences between the above methods and the framework proposed in this
	paper, as detailed next.
\newline\noindent $\bullet$ \textit{On the probabilistic model:} All current   probabilistic  models for asynchronous BCD methods are
based on the (implicit or explicit) assumption that the random variables $i^k$ and $\mathbf{d}^k$  are {\em independent};
this greatly simplifies the convergence analysis.
However, in reality there is a strong dependence of the delays  $\mathbf{d}^k$ on the updated block $i^k$.  Consider the setting where the variables are  partitioned among two workers and each worker updates only its own block-variables; let $\mathcal N_1$ and $\mathcal N_2$ be the index set of  the  blocks controlled by worker $1$ and $2$, respectively.  It is clear that  in the updates of worker $1$ it will always be   $d_i^k=0$, for all $i\in \mathcal N_1$ and $k$, while (at least some) delays $d_i^k$  associated with the blocks $i\in \mathcal N_2$ will be   positive; the opposite happens to worker two. The independence assumption is unrealistic also in settings where all the workers share all the variables. Blocks that are updated less frequently than others, when updated, will have larger associated delays. This happens, for instance, in problems where i) some blocks are more expensive to update than others, because they are larger,  bear more nonzero entries, or  data retrieval requires longer times;   or ii) the updates are carried by  heterogeneous workers (e.g., some are faster or busier than others).
 We tested this assumption, performing an asynchronous algorithm on two different architectures and measuring the average delay corresponding to different blocks updated. The experiments were performed on a shared-memory system with 10 cores of an Intel E5-2699Av4 processor. An asynchronous algorithm was applied to a LASSO problem  {\cite{tibshirani1996regression}} with 10000 variables, partitioned uniformly into 100 contiguous blocks; the Hessian matrix was generated with high sparsity on several rows. %, so that the block-variables corresponding to these rows could be updated faster than others.
 All the cores can update any block, selected  uniformly at random.  {We found that}  blocks associated with the sparse rows of the Hessian have delays $\mathbf{d}^k$ with components between 0 and 3, while the delays of the other  blocks  were all bigger than 20. 
 Even when the computing environment is homogeneous and/or the block updates have the same cost, the aforementioned dependence persists.  We simulated a message-passing system on Purdue  Community Cluster Snyder; we used two nodes of the cluster, each of them equipped with  10 cores of an Intel Xeon-E5 processors and its own shared memory. Every node can update every block, selected uniformly at random. We ran an asynchronous algorithm on the  same LASSO problem described above but now with a dense Hessian matrix. The blocks updated by node 1 have an average delay of 12 while those updated by node 2  experience an average delay of 22.
% In this experiment the Hessian of the LASSO problem was generated entirely dense. We observed that the values of the delay vector components varied from 1 to 32, depending on the core performing the update, thus
%showing thus a dependence between the block index to be updated  and the associated delay vector.
This can be due to several uncontrollable factors, like operation system and memory schedulers, buses controllers, etc.,  which are hard to rigorously model and analyze.

%\textcolor{blue}{We remark that a formally similar, albeit conceptually different, issue of independence arises in the probabilistic models used in Stochastic Gradient methods for the minimization of the sum of many functions. This point was first noted in   the technical report
%	\cite{Mania_et_al_stochastic_asy2016}, see also \cite{leblond2017Asaga,Pedregosa2017breaking} for further
%	developments.
%The solution proposed in these papers, although theoretically interesting, can not be applied to large-scale problems because it is computationally too expensive and furthermore only applies to strongly convex unconstrained problems.}

Another   unrealistic assumption often made in the literature \cite{Hogwild!,liu2015asynchronous,liu2015asyspcd,davis2016asynchronous}
is that the block-indices $i^k$  are selected \emph{uniformly} at random. While this assumption   simplifies the
convergence analysis, it limits the applicability of the model; see Examples 4 and 5 in Section~\ref{sec:Examples}.
 In a nutshell, this assumption may be
satisfied  only if all workers have
the same computational power and have access to all variables.

We conclude the discussion on probabilistic models underlying asynchronous
algorithms mentioning the line of work dealing with stochastic gradient 
methods. Stochastic gradient methods are similar to block-descent approaches in that at each
iteration sampling is performed to determine the nature of the update, but sampling is
done among functions in an optimization problem minimizing the sum of functions,
as opposed to block variables. 
A related, albeit different, issue of
independence in the probabilistic models used in stochastic gradient methods was first noted in the technical 
report \cite{Mania_et_al_stochastic_asy2016}, see also \cite{leblond2017Asaga,Pedregosa2017breaking} for
further developments. These papers circumvent the issue by enforcing independence 
(a) using a particular manner of 
labeling iterations as well as 
(b) reading the entire vector of variables regardless of the sparsity 
pattern among the summand functions in the objective.
However, the analysis in \cite{Mania_et_al_stochastic_asy2016,leblond2017Asaga,Pedregosa2017breaking} is (c) only performed in the context of strongly
convex unconstrained problems, (d) involves uniform sampling and (e) is only applicable
for the shared memory setting.
Thus, while the analysis and procedures described in the references above are interesting,
on the whole requirements (b)-(e) make these proposals  
of marginal interest in the context of block-descent methods 
(even assuming they can actually be adapted to our setting).
\begin{comment}
{We conclude the discussion on  probabilistic models underlying asynchronous algorithms mentioning the line of work dealing with stochastic gradient methods. A related, albeit different,  issue of independence  in the probabilistic models used therein was first noted in   the technical report
 \cite{Mania_et_al_stochastic_asy2016}, see also \cite{leblond2017Asaga,Pedregosa2017breaking} for further
 developments.
 The approach in these papers  is to modify the algorithm to \emph{enforce} the ``desired independence'', by  (a)
 changing the way iterations are labeled and (b) by forcing the reading of (at least) the \emph{entire}  vector of optimization variables  at each iteration.
Furthermore, even assuming the approach in  \cite{Mania_et_al_stochastic_asy2016,leblond2017Asaga,Pedregosa2017breaking} can be adapted to our BCD setting,  to prove convergence, they would require that (c) the indices $i^k$ are selected uniformly at random; (d) a  shared memory system is considered; and
 (e) the optimization problem is strongly convex  and unconstrained. On the whole, these requirements make the  very interesting proposals in
 \cite{Mania_et_al_stochastic_asy2016, leblond2017Asaga,Pedregosa2017breaking} not practical and of marginal interest in the context of block-descent methods.}
\end{comment}
\newline\indent Differently from the aforementioned works,  our more general and sophisticated  probabilistic model neither postulates the independence between $i^k$ and $\mathbf{d}^k$ nor  requires  artificial changes in the algorithm [e.g., costly unnecessary readings, as in (b)] to enforce it; it    handles instead    the potential dependency among variables directly. By doing so, one can establish  convergence  without
  requiring any of the restrictive conditions (b)-(e), and  significantly enlarge the class of computational architecture falling within the model [e.g., going beyond  (d) and (e)]$-$see  Section~\ref{sec:Examples} for several examples.
{The necessity of
 a new    probabilistic model  of asynchrony in BCD methods was first observed in our conference works \cite{cannelli2016parallel,cannelli2017asynchronous} while  the foundations of our approach  were
 presented  in our technical reports \cite{companion2,companion} along with some numerical results. %
 Here we improve  the  analysis of \cite{companion2,companion} by   relaxing
 considerably the assumptions for convergence  and tightening  the complexity bounds. }\newline 
 \noindent $\bullet$ \textit{Nonconvex constraints}: Another important feature of our algorithm is the ability to handle nonconvex objective functions
 and  nonconvex constraints by an algorithm that only needs to solve, at each iteration, a strongly convex optimization subproblem.  Almost  all  asynchronous  methods cited above can handle only convex
 optimization problems or, in
 the case of fixed point problems, nonexpansive mappings.
 The exceptions are  \cite{lian2015asynchronous,yun2014nomad} and, more relevant to our  setting,
 \cite{davis2016asynchronous,DavisEdmundsUdell} that
 study
 unconstrained and constrained  nonconvex optimization problems, respectively.  However,  the papers dealing with constrained problems, i.e.
 \cite{davis2016asynchronous,DavisEdmundsUdell}, propose algorithms that require, at each iteration, the global
 solution of nonconvex subproblems.
 Except for few  cases,
 the subproblems could be   hard to solve and potentially as difficult as the original one.
 \newline
 \noindent $\bullet$ \textit{Successive Convex Approximation}:
 All the asynchronous algorithms described so far use proximal linearization to define
 subproblems. As   already pointed out, this is the first paper where subproblem  models able to capture more structure of the objective functions are
 considered. This offers  more freedom and flexibility to tailor the minimization algorithm to the problem
 structure, in order to obtain more efficient solution methods.
 \newline \textbf{Notation}: We use the following notation for  random variables and their realizations: underlined
 symbols denote
 random variables, e.g., $\underline{\mathbf{x}}^k$, $\mathbf{x}^{k-\underline{\mathbf{d}}^k}$, whereas the
 same symbols with no underline
 are the corresponding realizations.\vspace{-0.6cm}

\section{Asynchronous Algorithmic Framework}
\label{sec:model}\vspace{-0.2cm}
In this section we introduce the assumptions on Problem \eqref{eq:optimization_problem} along with the formal description of the proposed algorithm. For simplicity of presentation,  we begin studying   \eqref{eq:optimization_problem}
 assuming that there are only convex  constraints, i.e.,  all ${\cal X}_i$   are convex. %Indeed, this simplifies the presentation and allows us to concentrate, in this and in the next two sections, on the main ideas underlying our approach.  The
 This unnecessary
 assumption will  be removed in Section \ref{nonconvex}.\smallskip
%The section is organized as follows. We first introduce the assumptions on Problem
%\eqref{eq:optimization_problem}. Then, after presenting more formally
%the algorithm already sketched in the Introduction, we also discuss the assumptions we place on the approximating functions $\tilde f_i$.
%\smallskip

%\subsection{Assumptions on the problem}
%We study   Problem  (\ref{eq:optimization_problem}) under the following  blanket assumptions. \smallskip

\noindent \textbf{Assumption A (On Problem  (\ref{eq:optimization_problem})).}\vspace{-0.1cm}
\begin{description}
	\item[(A1)]  Each set $\mathcal{X}_i\subseteq\mathbb{R}^{n_i}$ is nonempty, closed, and convex;\smallskip
	\item[(A2)] $f:\mathcal{O}\rightarrow\mathbb{R}$ is $C^1$, where $\mathcal{O}$ is an open set containing $\mathcal{X}\triangleq  \mathcal{X}_1\times \cdots \times \mathcal{X}_N$;\smallskip
	\item[(A3)]  $\nabla_{\mathbf{x}_i} f$ is   $L_{f}$-Lipschitz continuous on $\mathcal{X}$; %, with Lipschitz constant $L_{f}$ independent of $i$;
	\smallskip
	\item[(A4)] Each $g_i:\mathcal{O}_i\rightarrow\mathbb{R}$ is convex, possibly nonsmooth, and $L_g$-Lipschitz continuous on $\mathcal{X}_i$, where $\mathcal{O}_i$ is an open set containing $\mathcal{X}_i$;
	\smallskip
	\item[(A5)] $F$ is coercive on $\mathcal{X}$, i.e., $\underset{ \mathbf{x} \in \mathcal{X},  \|\mathbf{x}\| \to \infty}{\lim}F(\mathbf{x})=+\infty$.
	
\end{description}

\noindent %While  A1 is made to simplify the presentation of the new algorithm and  will be removed in Section \ref{nonconvex},  % is only made in order to simplify the presentation and be able to
%concentrate on the main modeling aspects;  it will be removed in Section \ref{nonconvex}.
These assumptions are rather standard. For example, A3 holds trivially if $\mathcal{X}$ is bounded and $\nabla f$ is locally Lipschitz. We remark that in most practical cases the
$g_i$'s are  norms or polyhedral functions and A4 is readily satisfied. Finally, A5 guarantees the existence of a solution.

We  introduce now our algorithmic asynchronous  framework. The asynchronous iterations performed by the workers are given in \eqref{eq:local model with ik} and \eqref{update} [cf.~Section \ref{sec:intro}]. However, the analysis of the algorithm based directly on \eqref{eq:local model with ik}-\eqref{update} is  not a simple task. %simplify the analysis of
%be  based on the updates \eqref{eq:local model with ik} and \eqref{update} [cf.~Section \ref{sec:intro}].
The key idea   is then
to introduce a ``global view'' of \eqref{eq:local model with ik}-\eqref{update} %, i.e., a  meta-algorithm for \eqref{eq:local model with ik} and \eqref{update}
that captures through a unified, general, probabilistic model several specific computational architectures/systems and asynchronous modus operandi.
%that abstracts from the specific computational architectures/systems and asynchronous modus operandi, yet retaining  the key description factor
%an abstract model for \eqref{eq:local model with ik} and \eqref{update}   that captures in a unified way different  computational architectures/systems and asynchronous modus operandi. %say something about the simplicity for the analysis.
%Let us introduce a global index $k$ to count iterations:
The iteration $k\to k+1$ is triggered when a block-component $i^k$   of the current $\bx^k$ is updated by some worker using (possibly) delayed information $\bx^{k-\d^k}$, thus  generating the new vector $\bx^{k+1}$. Note that, in this model, the worker that  performs the update is immaterial. Given  \eqref{eq:local model with ik} and \eqref{update}, it is clear that  the update  $\bx^k\to \bx^{k+1}$ is fully determined once $i^k$ and $\d^k$ are specified. In several asynchronous methods, the index $i^k$ is chosen randomly. Even when this is not the case,  the   values of $i^k$ and $\d^k$ are difficult to preview  beforehand, because they  depend on several factors which are hard to  model mathematically, such as the computational architecture, the specific hardware, the communication protocol employed by the workers, possible hardware failures, etc..
   Therefore, we model the sequence  of pairs $\{(i^k, \d^k)\}_{k\in \mathbb N_+}$ generated by the algorithmic process as a realization of a stochastic process; the probabilistic space associated to this stochastic process is formally introduced in Section \ref{sec:probabilistic}.    The proposed general asynchronous model is summarized in
Algorithm~\ref{alg:global}, which we term Asynchronous FLexible ParallEl Algorithm (AsyFLEXA).

  \begin{algorithm}[t]
	\caption{Asynchronous FLexible ParallEl Algorithm (AsyFLEXA)}
	\label{alg:global}
	\begin{algorithmic}
		\STATE{\textbf{Initialization:}} $k=0$, $\mathbf{x}^0\in\mathcal{X}$,  $\gamma\in(0;1]$.
		\WHILE{a termination criterion is not met}
		\STATE{\texttt{(S.1)}  The random variable $(\underline{i}^k,\underline{\mathbf{d}}^k)$ is realized as  $(i^k,
		\mathbf{d}^k)$;}
		\STATE{\texttt{(S.2)}}  $\hat{\mathbf{x}}_{i^k}(\mathbf{x}^{k-\mathbf{d}^k})$ is computed: \vspace{-0.2cm}
\begin{equation}\label{eq:best-response-convex}
\hat{\bx}_{i^k}(\bx^{k-\mathbf{d}^k})
\triangleq \argmin_{\bx_{i^k} \in {\mathcal{X}}_{i^k}}
 \tilde f_{i^k}(\bx_{i^k};\bx^{k-\mathbf{d}^k}) + g_{i^k}(\bx_{i^k}),\vspace{-0.2cm}
\end{equation}
		\STATE{\texttt{(S.3)}}     $\mathbf{x}^k_{i^k}$ is acquired;
\STATE{\texttt{(S.4)}  The block $i^k$ is updated:\vspace{-0.2cm}
\begin{equation}\label{update-Algo1}\mathbf{x}_i^{k+1}=\begin{cases}\mathbf{x}^k_i+\gamma(\hat{\mathbf{x}}_i
(\mathbf{x}^{k-\mathbf{d}^k})-\mathbf{x}_i^k),&\text{if }i=i^k\\
\mathbf{x}_i^k&\text{if }i\neq i^k\end{cases}\vspace{-0.2cm}\end{equation}}
		\STATE{\texttt{(S.5)}   $k \leftarrow k+1;$} 	\ENDWHILE
	%	\RETURN $\mathbf{x}^k$
	\end{algorithmic}
\end{algorithm}

\noindent \textbf{Discussion on Algorithm~\ref{alg:global}.} Several comments are in order.
\begin{asparaenum} \item \emph{On the generality of the model:} Algorithm~\ref{alg:global} represents a gamut of asynchronous schemes and architectures, all captured  in an abstract and  unified way by the stochastic process modeling the specific mechanism of generation of the delay vectors $\mathbf{d}^k$ and  indices $i^k$ of the blocks to updates.
 For concreteness, we show next how Algorithm~\ref{alg:global} customizes when  modeling  asynchrony in shared-memory and message passing-based  architectures. \smallskip
	
	\noindent\textit{Example 1: Shared-memory systems.} Consider a shared-memory system wherein multiple cores update in an asynchronous fashion blocks of the  vector $\bx$, stored in a shared memory. An iteration $k \to k+1$ of Algorithm~\ref{alg:global} is triggered when a core  writes  the (block) update  $\bx_{i^k}^{k+1}$ in the shared memory (Step 4). Note that the cores need not know the global iteration index $k$.  No memory lock is assumed, implying that   components
of the variables may be written by some cores while other components are  simultaneously
read by others. This \emph{inconsistent read}   produces vectors  $\bx^{k-\mathbf{d}^k} = (x_1^{k-d_1^k}, x_2^{k-d^k_2}, \ldots, x_N^{k-d^k_N})$,   to be  used in the computation of $\hat{\mathbf{x}}_{i^k}$ (Step 2), whose (block) component  $\bx_i^{k-d_i^k}$ is a (possibly) delayed version  of block $i$ read by the core  that is going to perform the update.
	Note that, while $\bx_i^{k-d_i^k}$  existed in the shared memory at some point in time, the entire delayed vector $\bx^{k-\mathbf{d}^k}$ might have not at any time. Also, in Step 4, it is tacitly assumed that the update of a block is {\em atomic} (the block is written in the shared memory as a whole) and while a core is writing that block no other core can modify the same block. This is minor requirement, which can be easily enforced in modern architectures either by a block-coordinate look or using a   dual-memory writing approach, see   \cite[Section 1.2.1]{peng2016arock}.

\begin{figure}
\begin{center}
\includegraphics[width=11cm]{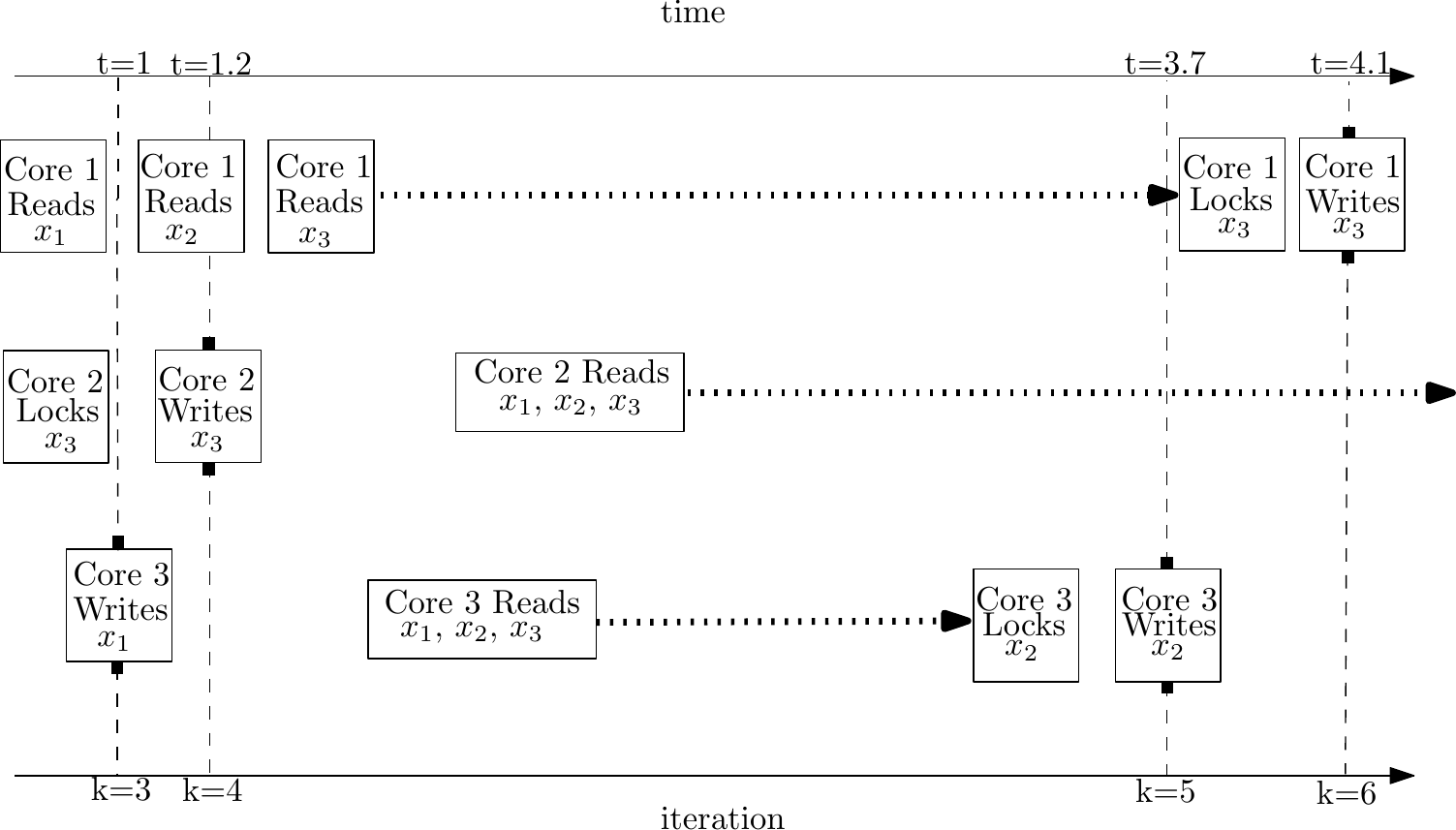}
\end{center}\vspace{-0.2cm}
\caption{AsyFLEXA modeling block asynchronous updates in a shared-memory system: three cores, vector variables $\bx\in\mathbb{R}^3$, scalar  blocks ($n_i=1$, for all $i$).% Note 1) the iteration
%counter increases each time a core finishes a computation and updates a variable; 2) when updating a
%variable, that variable must be locked from other cores modifying it, but it is for a relatively short
%period of time; and 3) the inconsistent read produces vectors $\mathbf{x}^{k-\mathbf{d}^k}$
%being used for computation of the subproblem solution whose components have existed at some point
%in memory, but not necessarily the entire vector.
}
\label{fig:inconsistent_read}\vspace{-0.4cm}
\end{figure}
%\textcolor{blue}{
Figure~\ref{fig:inconsistent_read} shows few iterations of the  algorithm dynamics in the asynchronous setting described above. The (continuous) time when operations (reading, writing, computation) are performed is indicated in the top  horizontal axes whereas the global (discrete) iteration counter is reported in the bottom axes. %, block locking, and inconsistent read properties
%The thick dotted horizontal arrows indicate local core computation (solving the subproblem)
%and we indicate when the iteration counter is updated and the time when updates take place.
The asynchronous updates happen as follows. At iteration $k=3$, Core $3$ writes $x_1^3$; therefore,   $\bx^3$ differs from $\bx^2$
in the first component. Core $2$ locks $x_3^2$ to quickly read it and perform the linear combination with $\hat x_3(\bx^{3-\mathbf{d}^3})$ [cf. (\ref{update-Algo1})], and updates $x_3$; therefore $\bx^4$ differs from $\bx^3$ in just the 3rd component. Note that core $2$ reads $x^2_3$ which is equal to $x^3_3$, so $d_3^3=0$; this is because between the lock and the writing of core $2$, no other cores wrote $x_3$ (core $3$ updates $x_1$). At iteration $k=5$, core $3$ writes $x_2^5$. In this case $\bx^{4-\mathbf{d}^4}$, used to compute $x^5_2 = (1-\gamma)x^4_2+\gamma\hat x_2(\bx^{4-\mathbf{d}^4})$, is exactly equal to $\bx^4$, since core $3$ reads the vector entirely after the last update, so $\mathbf{d}^4=\mathbf{0}$. A different situation happens when iteration $k=6$ is  triggered by core $1$: the vector used by the core to perform its update is   ${\bx}^{5-\mathbf{d}^5}$, which is such that
$x^{5-\mathbf{d}^5}_1 = x^2_1 \neq x^3_1$, $x^{5-\mathbf{d}^5}_2 = x^2_2$, and $x^{5-\mathbf{d}^5}_3 = x^4_3\neq x^2_3$; therefore, $\bx^{5-\mathbf{d}^5}$ never existed in the shared memory at any time. It can be seen that the vector of delays at $k=6$ reads $\d^5 = (3,1,0)^{\text{T}}$. %, since the first block was updated in iteration $k=3$, which is three iterations earlier, the second block was updated at the previous iteration and otherwise not since it was read, and the third component has remained the same since it was read.
 Note that the delay vector $\mathbf{d}^k$ used at a given iteration may not be unique: different values for the components $d_i^k$ may produce the same delayed vector $\mathbf{x}^{k-\mathbf{d}^k}$. For instance, in the example above, the  vector $(2,1,0)^{\text{T}}$ could have been  used  in place of $\mathbf{d}^5$. %Nevertheless, this lack of uniqueness does not affect  the theoretical results.
\smallskip
	
	\noindent\textit{Example 2: Message passing  systems.}   Consider a message passing-based system: multiple computational units (e.g., clouds, cluster computers) are connected through a (directed) graph, modeling the communication pattern among the units. Suppose that every worker has in charge the update of a set of block variables, partitioned  among all the workers. In this setting, Algorithm~\ref{alg:global} still models asynchronous updates and communications.    There is no shared memory;   every worker updates its own variables writing  its own local memory, and then broadcasts its   updates to its neighbors, according to a given protocol, which specifies, for instance, how ofter the communications will happen and with whom. In this setting,  $\bx^{k-\d^k}$   corresponds to the most recent information a worker has received from the others at the time of its update.\smallskip

\item \textit{On the surrogate functions $\tilde f_{i}$.}
A degree of freedom offered by the proposed framework is the choice of the surrogate function used in the subproblems (\ref{eq:best-response-convex}) solved by the workers at each iteration.   We consider the following general class of surrogate functions (we denote by $\nabla \tilde f_{i}$
the partial gradient of $\tilde f_i$ with respect to the first argument).

	\noindent \textbf{Assumption B (On the surrogate functions  $\tilde{f}_i$'s).} Given $\tilde f_{i}: \mathcal{X}_i\times \mathcal{X}\to \mathbb{R}$, with $i\in\mathcal{N}$, we assume:
	\begin{description}
		\item[ (B1)] $\tilde f_{i} (\bullet; \mathbf{y})$ is $C^1$ on an open set containing $\mathcal{X}_i$, and $c_{\tilde{f}}$-strongly
		convex
		on $ \mathcal{X}_i$, for all $\mathbf{y}\in \mathcal{X}$;%, with $c_{\tilde{f}}$ being the strong convexity constant		independent of  $i$ and $k$;
		\smallskip
		\item[  (B2)]   $\nabla \tilde f_{i} (\mathbf{y}_i;\mathbf{y}) = \nabla_{\mathbf{y}_i} f(\mathbf{y})$, for all $\mathbf{y} \in  \mathcal{X}$;
		\smallskip
		\item[  (B3)]  $\nabla \tilde f_{i} (\mathbf{y}_i;\bullet)$
		is $L_B$-Lipschitz continuous
		on $ \mathcal{X}$,
		for all $\mathbf{y}_i \in  \mathcal{X}_i$;\smallskip% with a Lipschitz constant $L_B$ which is independent of  $i$ and $k$.
		\item [  (B4)]$\nabla \tilde f_{i} (\bullet;\mathbf{y})$
		is $L_E$-Lipschitz continuous
		on $ \mathcal{X}_i$, for all $\mathbf{y} \in  \mathcal{X}$. %, with a Lipschitz constant $L_E$ which is independent of $i$ and $k$.\smallskip
	\end{description}\smallskip

\noindent The surrogate  $\tilde f_i(\bullet;\mathbf{x}^k)$  should be regarded as  a (simple) strongly convex
local approximation of $f$ around $\mathbf{x}^k\in \mathcal X$, that
preserves the first order properties of $f$. %Surrogates satisfying Assumption B were extensively used in, e.g., \cite{FLEXA,facchinei2016feasible}.
%Conditions B3 is a  simple Lipschitzianity requirement  that is surely satisfied,
%for example, if the set $\cal X$ is bounded.
%Some valid instances of  $\tilde f_i$'s are discussed next; we refer the interested reader to \cite{FLEXA,facchinei2016feasible} for further examples.
Finding a surrogate $\tilde f_i$ that satisfies Assumption B is in general non difficult; in
any case, one can  always  choose $\tilde f_{i} (\mathbf{x}_i; \mathbf{x}^k)
= \nabla_{{\mathbf{x}}_i}   f(\mathbf{x}^k)^\text{T}(\bx_i -\mathbf{x}_i^k) + \beta\| \bx_i - \mathbf{x}_i^k\|^2_2$, where $\beta$ is a
positive constant, which leads to  the classical proximal-gradient update. However, having the possibility to use a different $\tilde f_i$ may be
useful to exploit some potential structure in the problem; of course,    a trade-off is expected: the more complex the
$\tilde f_i$, the more information will be retained  in $\hat \bx_i(\mathbf{x}^k) $, but also the more intensive its computation  is
expected  to be. On the other hand, the solution of more complex subproblems will in general decrease the number of
information exchanges in the system, which may be a key advantage in many applications.
%An appropriate choice of $\tilde f_i$ may be crucial to the numerical efficiency of the overall method, but the most
%suitable
%choice is problem dependent.  Here we only list a few
%examples that go beyond the proximal-gradient choice.
Some valid instances of  $\tilde f_i$'s going  beyond the proximal-gradient choice are discussed next; we refer the interested reader to \cite{FLEXA,facchinei2016feasible} for further examples.
\smallskip

	\noindent $\bullet$ If $f(\mathbf{x}_1,\ldots,\mathbf{x}_N)$ is block-wise uniformly convex, instead of linearizing $f$
	one can exploit a second-order approximation and set  $\tilde{f}_i(\mathbf{x}_i;
	\mathbf{x}^k)=f(\mathbf{x}^k)+\nabla_{\mathbf{x}_i}f(\mathbf{x}^k)^\text{T}(\mathbf{x}_i-\mathbf{x}^k_i)+\frac{1}{2}
	(\mathbf{x}_i-\mathbf{x}^k_i)^\text{T}\nabla^2_{\mathbf{x}_i\mathbf{x}_i}f(\mathbf{x}^k)
	(\mathbf{x}_i-\mathbf{x}^k_i)+ \beta\|\mathbf{x}_i-\mathbf{x}_i^k\|^2_2$;\smallskip
	
	\noindent$\bullet$ In the same setting as above, one can also better preserve the partial convexity of $f$ and set
	$\tilde{f}_i(\mathbf{x}_i;\mathbf{x}^k)=f(\mathbf{x}_i,\mathbf{x}_{-i}^k)+ \beta\|\mathbf{x}_i-\mathbf{x}_i^k\|^2_2$,
	where $\mathbf{x}_{-i}\triangleq (\mathbf{x}_{1},\ldots, \mathbf{x}_{i-1},
	\mathbf{x}_{i+1},\ldots ,\mathbf{x}_{N})$;\smallskip
	
	\noindent$\bullet$ As a last example, suppose that $f$ is the difference of two convex functions $f^{(1)}$ and
	$f^{(2)}$, i.e., $f(\mathbf{x})=f^{1}(\mathbf{x})-f^{2}(\mathbf{x})$, one can preserve the partial convexity in $f$ setting
	$\tilde{f}_i(\mathbf{x}_i;\mathbf{x}^k)=f^{(1)}(\mathbf{x}_i, \mathbf{x}_{-
	i}^k)-\nabla_{\mathbf{x}_i}f^2(\mathbf{x}^k)^\text{T}(\mathbf{x}_i-\mathbf{x}^k_i)+{ \beta}\,\|
	\mathbf{x}_i-\mathbf{x}_i^k\|^2_2$.\vspace{-0.4cm}
   \end{asparaenum}

\section{AsyFLEXA: Probabilistic Model} \label{sec:probabilistic}\vspace{-0.2cm}
In this section, we complete the description of AsyFLEXA, introducing  the probabilistic model underlying the generation of the pairs index-delays. % We remark by the outset that, while the model is quite technical,   its complexity has no algorithmic consequences; it is instrumental to abstract in a simple model different computational architectures and asynchronous protocols, making the convergence analysis of all the resulting schemes affordable, it is not
% relevant for the implementation of the algorithm itself.
%The
%purpose of our analysis in this section is to show that the assumptions we need to place on the stochastic
%process $(\underline{i}^k, \underline{\d}^k)$ in order to prove convergence  are practically easily satisfied in most computational
%environments. Indeed, the properties of the stochastic process $(\underline{i}^k, \underline{\d}^k)$ are essentially determined by the
%computational architecture itself and not by algorithmic choices (like the surrogate $\tilde f$ used).

 Given   Problem~\eqref{eq:optimization_problem} and an initial point $\bx^0$,   the pair  $(i^k,\d^k)$ in Step 1 of Algorithm~\ref{alg:global}, for each $k$,  is a realization of a random vector $\underline{\boldsymbol{\omega}}^k\triangleq (\underline{i}^k,
\underline{\mathbf{d}}^k)$, taking values on $\mathcal N \times \mathcal D$,
where ${\cal D}$ is the set    of all possible delay vectors.
We anticipate that all the delays $d_i^k$ are assumed to be bounded (Assumption C below), i.e., $d^k_i \leq \delta$, for all $k$ and $i$. %We will assume that each $\bx^{k-\mathbf{d}^k}$ does not use information that is older than $\delta$ iterations
% (see Assumption C below); this translates into the requirement that
%$d^k_i \leq \delta$, for all $k$ and $i$.
%Therefore
Hence,  ${\cal D}$ is contained in the set of all possible  $N$-length vectors whose components are
integers between $0$ and $\delta$. %We define next a proper probability space for  $\underline{\boldsymbol{\omega}}^k$. \newline
 Let   $\Omega$ be the sample space  of   all  the  sequences $\omega\triangleq\{ (i^k,
\mathbf{d}^k)\}_{k\in\mathbb{N}_+}$.\footnote{With a slight abuse of notation,
we denote by $\boldsymbol{\omega}_k$ the $k$-th element of the sequence $\omega\in \Omega$, and by
$\boldsymbol{\omega}^k$ the value taken by the random variable $\underline{\boldsymbol{\omega}}^k$ over $\omega$,
i.e.
$\underline{\boldsymbol{\omega}}^k(\omega) = \boldsymbol{\omega}^k$.}  We will use the following  %let us define   a probability space whose
%sample space is $\Omega$ and that
%can sensibly represents our algorithmic framework. We first introduce some
shorthand notation:  we set
 $\underline{\boldsymbol{\omega}}^{0:k} \triangleq (\underline{\boldsymbol{\omega}}^0,\underline{\boldsymbol{\omega}}^1, \ldots,
 \underline{\boldsymbol{\omega}}^k)$ (the first $k+1$ random variables);
 ${\boldsymbol{\omega}}^{0:k} \triangleq ({\boldsymbol{\omega}}^0,{\boldsymbol{\omega}}^1, \ldots,{\boldsymbol{\omega}}^k)$ ($k+1$ possible values
 for the random variables  $\underline{\boldsymbol{\omega}}^{0:k}$); and
 ${\boldsymbol{\omega}}_{0:k} \triangleq ({\boldsymbol{\omega}}_0,{\boldsymbol{\omega}}_1, \ldots,{\boldsymbol{\omega}}_k)$ (the first $k+1$
 elements of $\omega$).  We introduce next the probability space that will be used to build our probabilistic model.

 The sample space is $\Omega$. To define a $\sigma$-algebra on $\Omega$, we consider, for
  $k\geq 0$ and every $\boldsymbol{\omega}^{0:k}\in {\cal N}\times {\cal D}$,
 the  cylinder\vspace{-0.1cm}
 $$C^k(\boldsymbol{\omega}^{0:k}) \triangleq \{\omega\in \Omega: \boldsymbol{\omega}_{0:k} =
 \boldsymbol{\omega}^{0:k}
 \},\vspace{-0.1cm}$$ i.e., $C^k(\boldsymbol{\omega}^{0:k})$ is the subset of $\Omega$ of all sequences $\omega$ whose first $k$
 elements are  $\boldsymbol{\omega}^0, \ldots \boldsymbol{\omega}^k$. Let us
 denote by ${\cal C}^k$ the set of all possible $C^k(\boldsymbol{\omega}^{0:k})$  when $\boldsymbol{\omega}^t$, $t=0,
 \ldots, k$,  takes all possible values;
 note, for future reference, that ${\cal C}^k$ is a partition of $\Omega$. Denoting by $\sigma\left({\cal C}^k\right)$ the $\sigma$-algebra generated by  ${\cal C}^k$, define for all $k$, \vspace{-0.1cm}
\begin{equation}\label{eq:sub-sigma}
 {\cal F}^k\triangleq \sigma\left({\cal C}^k\right) \qquad \mbox{\rm and}\qquad
 {\cal F}\triangleq \sigma\left(\cup_{t=0}^\infty{\cal C}^t\right).
 \end{equation}
We have ${\cal F}^k\subseteq {\cal F}^{k+1} \subseteq {\cal F}$ for all $k$. The latter inclusion is obvious, the former
derives easily from the fact that any cylinder in ${\cal C}^{k-1}$ can be obtained as a finite union of cylinders in ${\cal
C}^k$.
\newline  The desired probability space  is fully defined once   $\mathbb{P}(C^k(\boldsymbol{\omega}^{0:k}))$, the probabilities
  of all  cylinders, are given. These probabilities should satisfy some very natural, minimal  consistency
  properties, namely:
  (i) the probabilities of the union of a finite number of disjoint cylinders should be equal to the sum of the probabilities
  assigned to each cylinder; and (ii) suppose that a cylinder $C^k(\boldsymbol{\omega}^{0:k})$ is contained in the union
  $U$ of a countably infinite number of other cylinders, then $\mathbb{P}(C^k(\boldsymbol{\omega}^{0:k})) \leq P(U)$.
  Suppose now that such a $\mathbb P$ is given. %these probabilities are given, under the above
 % described  conditions (i) and (ii),
 Classical results
  (see, e.g., \cite[Theorem 1.53]{klenke2013probability})
  ensure that one can extend  these probabilities to a probability measure $P$ over $(\Omega, {\cal
  F})$, thus defining our working probability space $A \triangleq
  (\Omega, {\cal F}, P)$. By appropriately choosing the probabilities of the cylinders, we can model
  in a unified way many cases of practical interest; several examples are given in Section~\ref{sec:Examples}.

Given $A$, we can finally define    the discrete-time, discrete-value   stochastic process $\underline{\boldsymbol{\omega}}$,
where $\{\underline{\boldsymbol{\omega}}^k(\omega)\}_{k\in \mathbb{N}_+}$  is a sample path of the process. The
$k$-th entry
$\underline{\boldsymbol{\omega}}^k(\omega)$ of $\underline{\boldsymbol{\omega}}(\omega)-$the $k$-th element of
the sequence $\omega-$is a realization of the random vector  $\underline{\boldsymbol{\omega}}^k= (\underline{i}^k,
\underline{\mathbf{d}}^k):\Omega \mapsto \mathcal N\times \mathcal D$.
This process fully describes the evolution of Algorithm 1. Indeed,  given an instance of Problem (\ref{eq:optimization_problem}) and  a starting point,  the trajectories
of   the  variables $\bx^k$
and  $ \bx^{k-\mathbf{d}^k}$ are  completely determined once a sample path $\{ (i^k, \mathbf{d}^k)\}_{k\in
\mathbb{N}_+}$ is drawn from $\underline{\boldsymbol{\omega}}$.

Note that the joint  probability
$$
p_{\underline{\boldsymbol{\omega}}^{0:k}}(\boldsymbol{\omega}^{0:k})\triangleq \mathbb{P}
(\underline{\boldsymbol{\omega}}^{0:k}=
\boldsymbol{\omega}^{0:k})
$$
is simply the probability of the corresponding cylinder: $C^k(\boldsymbol{\omega}^{0:k})$.
 We will often need to consider the conditional probabilities  $p((i,\mathbf{d})\,|\, \boldsymbol{\omega}^{0:k})\triangleq
 \mathbb{P}(\underline{\boldsymbol{\omega}}^{k+1}=(i,\mathbf{d})|
 \underline{\boldsymbol{\omega}}^{0:k}={\boldsymbol{\omega}}^{0:k})$.
 Note that we have
 \begin{equation}\label{eq:conditional prob}
 p((i,\mathbf{d})\,|\, {\boldsymbol{\omega}}^{0:k}) = \frac{\mathbb{P}(C^{k+1}(\boldsymbol{\omega}^{0:k+1}))}
 {\mathbb{P}(C^{k}(\boldsymbol{\omega}^{0:k}))},
\end{equation}
 where   we tacitly assume $ p((i,\mathbf{d})\,|\, {\boldsymbol{\omega}}^{0:k})=0$, if $\mathbb{P}(C^{k}(\boldsymbol{\omega}^{0:k}))=0$.
We remark that these probabilities need not be  known in practice to implement the algorithm. They are instead determined %A specific choice of this probabilities is determined
based on
 the  particular system (hardware architecture,  software implementation,  asynchrony, etc.) one is interested to model. Here,  we make only some  minimal assumptions on these probabilities and stochastic model,  as stated next.

%above probabilistic model, for some probability distributions, and further assume some additional minimal conditions
 %are satisfied, as stated next.\smallskip

\noindent \textbf{Assumption C (On the probabilistic model).} Given Algorithm 1 and the stochastic process
$\underline{\boldsymbol{\omega}}$, suppose that\vspace{-0.2cm}
\begin{description}
	\item[(C1)] There   exists a $\delta\in \mathbb{N}_+$, such that $d^k_{i} \leq \delta$,  for all $i$ and $k$;\smallskip
	
	\item[(C2)]  For all $i\in\mathcal{N}$ and $\omega\in\Omega$, there exists at least one $t\in[0,\ldots,T]$, with $T>0$, such that $$\sum_{\mathbf{d}\in
		\mathcal D}p((i,\mathbf{d})\,|\, {\boldsymbol{\omega}}^{0:k+t-1})\geq p_{\min},\quad\text{if}\quad p_{\underline{\boldsymbol{\omega}}^{0:k+t-1}}(\boldsymbol{\omega}^{0:k+t-1})>0,$$ for some $p_{\min}>0$;\smallskip
	\item[(C3)] $d^k_{i^k} =0$,
	%$\mathbf{x}_i^k=\tilde{\mathbf{x}}_i^k$,
	for any
 $k\geq0$.
\end{description}

\noindent
These are quite reasonable assumptions, with very intuitive interpretations. C1 just limits the age of the old
information used in the updates. Condition C2 guarantees that every $T$ iterations each block-index $i$ has a
non negligible positive probability to be updated. These are minimal requirements that are   satisfied in  {practically all computational environments.}
 The condition
 $d^k_{i^k} =0$  means that when a worker updates the $i^k$-th block,  it   uses the most
 recent value of that block-variable. This assumption
 is automatically  satisfied, e.g.,  in a message passing-based system  or in a shared memory-based architecture   if the
 variables are partitioned and
 assigned to different cores (see Example 6 in Section \ref{sec:Examples}).
 If instead all the cores can update all variables,
  $d^k_{i^k} =0$ can be   simply enforced by a
 software lock on the ${i^k}-$th block of the shared memory:  once a core $c$ has read a
 block-variable
 $\bx_{i^k}$,  no other core can  change it, until $c$ has performed its update. Note that   in practice it is very unlikely that  this lock
 affects
 the performance of the
 algorithm, since usually  the number of cores is  much smaller than the number
 of  block-variables. 
Actually, in some systems, this lock  can bring in some benefits.  For instance, consider two cores sharing all variables, with one core much faster than the other. A lock on $\bx_{i^k}^k$   will prevent    potentially  much older information to overwrite  most recent updates of the faster core.
Note also that conditions similar to C3 are required by all block asynchronous methods in the literature but \cite{peng2016arock}: they take the form of locking the variable to update  before performing a prox
operation %with the nonsmooth term
\cite{liu2015asyspcd}. 

\begin{remark} \label{rem:key1} \rm
%The proposed probability model  is meant to describe the evolution  of all the asynchronous %schemes and computational architectures falling under the unifying umbrella of Algorithm 1 and %satisfying Assumption C.  By analyzing Algorithm 1, one can thus claim convergence of all the  %asynchronous schemes subsumed by the model (see Section~\ref{sec:convergence}). Note that
The knowledge of the probability space $A$ is by no means  required from the workers   to
perform the updates. One need not even specify explicitly the probability distribution; it is
sufficient to show that a probability space $A$ satisfying Assumption C exists for   the specific
system (e.g., computational architecture, asynchronous protocol, etc.)  under consideration. We
show next how to do so for several schemes of practical interest.  \vspace{-0.6cm}
\end{remark}

\subsection{Examples and special  cases}\label{sec:Examples} \vspace{-0.2cm} The proposed  model encompasses a gamut of practical  schemes, some of which are discussed next.
 {It is of note  that our framework allows us to analyze in a unified way not only  randomized methods, but also deterministic algorithms.}
% We show that  not only we can easily analyze  in a more rigorous way settings already studied in
%the literature,  but  for the first time we can also deal in a sound way with some schemes/
%scenarios that current literature fails to analyze.

\noindent
{\bf 1. Deterministic sequential cyclic BCD:} %This is of course a limit case, but it is very interesting
%that we can nevertheless fit it into our framework; furthermore it allows us to present in the simplest possible setting the way we can deal with deterministic algorithms.
In a deterministic,   cyclic method there is only one core that cyclically updates all block-variables; for simplicity we assume  the natural order, from 1 to $N$. Since there is only one core, the reading is always consistent and there are no delays: $\cal D=\{\mathbf{0}\}$. To represent the cyclic choice it is now enough to assign probability 1 too all cylinders of the type\vspace{-0.2cm}
$$
 C^k = \{ \omega: \omega_0 = (1,\mathbf{0}), \omega_1 = (2,\mathbf{0}), \ldots, \omega_k = ((k \mod N) +1,\mathbf{0}) \}\vspace{-0.2cm}
$$
and  probability zero to all others. It is easy to see that Assumption C  is satisfied. This can be seen as a probabilistic model of the deterministic algorithm in \cite{scutari2014decomposition}. The consequence however is that, by  Theorem~\ref{Theorem_convergence}, convergence can be claimed only in a probabilistic sense (a.s.). %, rather than in a deterministic sense while we know that the deterministic algorithms we described converges in a deterministic sense, by applying Theorem \ref{Theorem_convergence} we get a.s. convergence; but
This is not surprising, as we are describing a deterministic algorithm as limiting case of a probabilistic model.

\noindent
{\bf 2. Randomized sequential BCD:}
 Suppose now that there is only one core  selecting at each iteration randomly an index $i$, with a   positive probability.
 Therefore, at each
iteration, $\bx^{k-\mathbf{d}^k} = \bx^k$ or, equivalently, $\cal
D=\{\mathbf{0}\}$.  This  scheme can be described by a stochastic process,  where the cylinders are assigned arbitrary
probabilities but satisfying all the conditions given in  previous subsection. %, the defining point here being $\cal
%D=\{\mathbf{0}\}$.

\noindent
{\bf 3. Randomized  parallel BCD:}
Suppose that there are $C$ cores and the
block-variables are partitioned in $C$ groups $I_1, I_2, \ldots,
I_C$; each set $I_c$ is assigned to one core only, say  $c$. % and only core $c$ can update block-variables in $I_c$. %Since no core $c'\neq c$ can update the variables in $I_c$, then
Hence, if core $c$ performs the update at iteration $k$,   all variables $i\in I_c$ satisfy $d_i^k=0$.
Denote by $\boldsymbol{0}(c)$, $\boldsymbol{1}(c)$, \ldots, and
 $(\boldsymbol{C-1})(c)$,  $c=1,\ldots,C$, the $N$-length vectors whose components are zeros in the positions of the block-variables in the set $I_c$ and all $0, 1,\ldots, C-1$ in the other positions, respectively. Set  $\mathcal{D}=\{\boldsymbol{0}(c), \boldsymbol{1}(c), \ldots,
 (\boldsymbol{C-1})(c),\,\,c=1\ldots,C\}$, and denote by $c^k$ the core performing the update at iteration $k$. % be
 %comprised by the vectors $\boldsymbol{0}$, $\boldsymbol{1}$, \ldots,
 %$\boldsymbol{C-1}$.
  Assign to the cylinders the following probabilities: $\forall i^0,i^1,\ldots,i^{2C-1},\ldots \in\mathcal{N}$,  \vspace{-0.1cm}
  \begin{align*}
 &\mathbb{P}(C^0((i^0,\boldsymbol{0}(c^0) ))) = 1/N,\smallskip\\
 & \mathbb{P}(C^1((i^0,\boldsymbol{0}(c^0) ), (i^1,\boldsymbol{1}(c^1) ))) = 1/N^2,\\
 \ldots\\
  &\mathbb{P}(C^{C-1}((i^0,\boldsymbol{0}(c^0) ), (i^1,\boldsymbol{1}(c^1) ),\ldots, (i^{C-1},\boldsymbol{C-1}(c^{C-1}) ))) = 1/N^C,\smallskip\\
  &\mathbb{P}(C^C(  (i^0,\boldsymbol{0}(c^0) ), (i^1,\boldsymbol{1}(c^1) ),\ldots, (i^{C-1},\boldsymbol{C-1}(c^{C-1}) ),
(i^C,\boldsymbol{0}(c^C) ))) = 1/N^{C+1},\smallskip\\
&\mathbb{P}(C^{C+1}  ( (i^0,\boldsymbol{0}(c^0) ), (i^1,\boldsymbol{1}(c^1) ),\ldots, (i^{C},\boldsymbol{0}(c^C) ),
(i^{C+1},\boldsymbol{1}(c^{C+1}) ))) = 1/N^{C+2},\smallskip\\
 &\ldots\smallskip\\
  &\mathbb{P}(C^{2C-1}
( (i^0,\boldsymbol{0}(c^0) ), (i^1,\boldsymbol{1}(c^1) ),\ldots,
(i^{2C-1},\boldsymbol{C-1}(c^{2C-1}) ))) = 1/N^{2C},\\
  &\ldots
\end{align*}
In words,
in the first $C$ iterations (from  $k=0$ to  $k=C-1$), all updates are performed using
the same vector $\bx^{k-\mathbf{d}^k} = \bx^0$; and at each iteration any index has uniform probability %$1/N$
to be selected.
This situation is then repeated for the next $C$ iterations, this time using  $\bx^{k-\mathbf{d}^k} = \bx^C$, and so on.
This model clearly corresponds to a randomized parallel block-coordinate descent method wherein $C$ cores update $C$ block-variables chosen uniformly at random.   Note that Assumption C is trivially satisfied.

The example above clearly shows that defining probabilities by using the cylinders can be quite tedious even in simple
cases. Using \eqref{eq:conditional prob} we can equivalently define the probabilistic model by specifying the conditional
probabilities $p((i,\mathbf{d})\,|\, {\boldsymbol{\omega}}^{0:k})$, which is particularly convenient when at every  iteration
$k$ the probability that  $\underline{\boldsymbol{\omega}}^k$ takes  value $(i,\mathbf{d})$
is independent of $ \boldsymbol{\omega}^{0:k-1}$. We exemplify this alternative approach  in the following examples.

\noindent
{\bf 4. Asynchronous BCD  in shared memory systems:}
Consider a
generic shared memory system, under Assumption C3. Then, %Since we do not make any particular assumption,
the set $\cal D$  is given by all the $N$-length vectors whose components are non negative integers between 0 and $\delta$. %We only require that C3 be somehow (partition) satisfied.
  Suppose that, at every $k$,  all cores select an index uniformly at random, but   the probabilities associated with the  delays can be  different. Then,  for every
  $k\geq 0
  $, given $\boldsymbol{\omega}^{0:k}$, and    $i\in {\cal N}$, we have
  \vspace{-0.1cm}
 $$
 \sum_{\d\in {\cal D}} p((i,\mathbf{d})\,|\, {\boldsymbol{\omega}}^{0:k}) = \frac{1}{N}.\vspace{-0.1cm}
 $$
  This setting is consistent with the one studied in \cite{lian2015asynchronous,davis2016asynchronous,DavisEdmundsUdell,liu2015asyspcd}.

  Our probabilistic model however   is more general than that of \cite{lian2015asynchronous,davis2016asynchronous,DavisEdmundsUdell,liu2015asyspcd}. For instance, differently from \cite{lian2015asynchronous,davis2016asynchronous,DavisEdmundsUdell,liu2015asyspcd}, we can easily model scenarios wherein  $
 \sum_{\d\in {\cal D}} p((i,\mathbf{d})\,|\, {\boldsymbol{\omega}}^{0:k})
 $ are not uniform and/or depend
 on the iteration and/or
  on the history of the algorithm. %; these options are not available to
%\cite{lian2015asynchronous,davis2016asynchronous,DavisEdmundsUdell,liu2015asyspcd}.
This
 possibility has important
 ramifications, since the assumption that the indices are selected uniformly at random is
 extremely strong and unrealistic. In fact, it is %this assumption can possibly be
 satisfied only if all cores have the
 same computational power  and have access to all variables.
This is not the case, in most of  practical settings. For instance,  consider a computational architecture composed of two CPUs sharing all the variables, with one CPU  much faster than the other.
  If the recent history exhibits iterations with a small
value of $\|\d^k\|_\infty$, then it is more likely that the slower core will perform the next update, and vice versa. Similar situations are expected also in other common settings, such as shared memory systems with variable  partitioning (see Example 5 below) and message passing-based architectures.
This clearly shows that our model  captures realistic architectures more faithfully.

\noindent
{\bf 5. Asynchronous BCD  in shared memory systems with    variable
partitioning:} Consider the setting as in Example 4, but now partition the variables across cores, as   described in Example 3.
 This is the configuration most often used in numerical experiments, since it has  proven
to be most effective in practice;  it also models a message passing architecture.  %Our mathematical translation of this setting is exactly the one  in the asynchronous block-descent in shared memory
%system with inconsistent read discussed in Example 4 above  with an additional provision. We assume the presence of a partition, as described in Example 3.
In order to satisfy C3, it is enough to set, for all $\boldsymbol{\omega}^{0:k}$ and $i\in I_c$,   \vspace{-0.1cm}
$$
p((i,\mathbf{d})\,|\, {\boldsymbol{\omega}}^{0:k}) = 0, \; \hbox{\rm if some $\d_j\neq \mathbf{0}$,  $j\in I_c$.}
$$

 A variant of this setting is the {\em without replacement} updating scheme considered in the  numerical experiments of \cite{liu2015asyspcd}:  the block-variables are partitioned among the cores and, at each ``epoch", variables in each partition are first randomly shuffled   and then updated cyclically by the core. This choice of the updates was shown to be numerically very effective. While \cite{liu2015asyspcd} cannot provide any theoretical analysis of such a scheme, we can easily cover this case by just  merging this example with  Example 2.

 \noindent \textbf{Other examples:} Several other examples can be considered, which we omit because of space limitation. Here we only mention that  it is quite straightforward to
  analyze by our model  also ``hybrid'' systems, which combine somehow two or more  examples described above.  {For instance,  consider   a cluster computer system
 %network of multicore processors %, each of them having its own shared memory;
 wherein the optimization variables are partitioned across the machines;   let $I_m$ be the set of variables controlled by machine  $m$ and stored in its internal shared memory. The update of the variables in $I_m$ is performed by the processors/cores of machine $m$ according to some  shared memory-based asynchronous scheme (e.g.,   subject to inconsistent read). The  information on  the variables not in $I_m$ is instead  updated through communication with the other processors (message passing)}
%In addition, note that we can
%take advantage, in this framework, of varying computational power and communication speed of %different processors
%by changing the form (complexity) of the subproblem depending on which core does the update.
\vspace{-0.4cm}

\section{AsyFLEXA: Convergence Results}\label{sec:convergence}\vspace{-0.2cm}
We present now our main convergence theorem, under Assumptions A-C. The extension to the  case of nonconvex constraints is addressed in Section \ref{nonconvex}.

	We will use $\|M_F(\mathbf{x})\|_2$ as a measure of optimality, with  \vspace{-0.1cm}
	\begin{equation}
	M_F(\mathbf{x})\triangleq \mathbf{x}-\underset{\mathbf{y}\in\mathcal{X}}{\text{arg min}}\left\{\nabla f(\mathbf{x})^\text{T}(\mathbf{y}-\mathbf{x})+g(\mathbf{y})+\frac{1}{2}\|\mathbf{y}-\mathbf{x}\|^2_2\right\}.\vspace{-0.1cm}
	\end{equation}
	This is a valid measure of stationarity because $M_F(\mathbf{x})$ is continuous and $\|M_F(\mathbf{x})\|_2={0}$ if
	and only if $\mathbf{x}$ is a stationary solution of Problem \eqref{eq:optimization_problem}.
	
	To state our major convergence result, we need to introduce first the following intermediate definitions.
Recalling the definition of
$T$ as in Assumption $C2$, let
 $\underline{\mathcal{K}}_i^k$   be the (random) set of iterations between $k-\delta$ and $k+T-1$ at which the block-variable $i$ has been updated,
$\mathcal{K}_i^k	\triangleq\{t\in[k-\delta;k+T-1]\,|\,\underline{i}^t=i\}$, while
$\bar{\underline{\mathcal{K}}}^k_i$ is the subset of $\underline{\mathcal{K}}_i^k$ containing  only the elements of $\underline{\mathcal{K}}_i^k$ (iterations) between $k-
\delta$ and $k-1$. Our convergence results leverage a  Lyapunov function $\tilde{F}$ that suitably combines present and past  iterates, and it is defined as
\begin{equation}
	\tilde{F}(\mathbf{x}^k,\dots,\mathbf{x}^{k-\delta})
	= F(\mathbf{x}^k)+\delta\frac{L_f}{2}\,\left(\sum\limits_{l=k-\delta}^{k-1}\left(l-(k-1)+\delta\right)\,\|\mathbf{x}^{l+1}-\mathbf{x}^l\|_2^2\right),
	\label{lyapunov}\end{equation}
	where it is understood that $\bx^l = \bx^0$, if  $l<0$; therefore,  $\tilde{F}$ is well defined for any $k\geq 0$. Note   that, by this convention, $\tilde{F}(\mathbf{x}^0,\dots,\mathbf{x}^{0-\delta})= F(\bx^0)$. Furthermore, %Since the second term in the definition of
%$\tilde{F}(\mathbf{x}^k,\dots,\mathbf{x}^{k-\delta})$	is always non negative,
we also have
$F^* \triangleq \underset{\bx \in {\cal X}}{\min}\; F(\bx)\leq\underset{\mathbf{x}^k,\dots,\mathbf{x}^{k-
		\delta}\in {\cal X}}{\min}\;\tilde F (\mathbf{x}^k,\dots,\mathbf{x}^{k-\delta})$. We are now ready to state our major convergence result.
\begin{theorem}\label{Theorem_convergence}  Let Problem \eqref{eq:optimization_problem} be given, along with  Algorithm 1 and the stochastic process $\underline{\boldsymbol{\omega}}$. Let $\{\underline{\mathbf{x}}^k\}_{k\in\mathbb{N}_+}$  be the
	sequence generated by the algorithm, given
	$\bx^0\in {\cal X}$.  Suppose that Assumptions A-C hold true and that
	\begin{equation}\label{eq:max gamma}
	 \gamma<\frac{c_{\tilde{f}}}{L_f+\frac{\delta^2L_f}{2}}.
	 \end{equation}
	  Define $K_\epsilon$ to be the first iteration such that $\mathbb{E}\left(\|M_F(\underline{\mathbf{x}}^k)\|_2^2\right)\leq\epsilon$. Then:

\noindent
(a)  Every limit point of $\{\underline{\bx}^k\}_{k\in\mathbb{N}_+}$ is a stationary solution  of  \eqref{eq:optimization_problem}
	a.s.;
	%\smallskip
	
	\noindent
	(b) The sequence of objective function values $\{ F(\underline{\mathbf{x}}^k)\}_{k\in\mathbb{N}_+}$ converges a.s.;\vspace{-0.3cm}

\begin{align}\label{eq:compKbound} (c)\;\;\; &
K_\epsilon\, \leq \, \frac{C_1(\gamma, \delta)(T+1)(F(\mathbf{x}^0)-F^*)}{\epsilon}\\&+\frac{C_2(\gamma, \delta)\gamma^2}{\epsilon}
\underbrace{	
	\sum\limits_{k=0}^{K_\epsilon}\mathbb{E}\left(\sum\limits_{i=1}^N\underline{M}_i^k\sum\limits_{t\in\underline{\mathcal{K}}_i^k}\left(\tilde{F}(\underline{\mathbf{x}}^t,\ldots,\underline{\mathbf{x}}^{t-\delta})-\tilde{F}(\underline{\mathbf{x}}^{t+1},\ldots,\underline{\mathbf{x}}^{t+1-\delta})\right)\right)}_\text{B},\nonumber
	\end{align}
	where: \vspace{-0.2cm}
	\begin{align}\label{eq:C1}
	&C_1(\gamma, \delta)\triangleq\frac{2\left(1+(1+L_E)(1+L_B+L_E)+\gamma^2Np_{\text{min}}\alpha^{-1}
	(1+(L_f+1)^2)\right)}{\gamma\left(c_{\tilde{f}}-\gamma\left(L_f+\frac{\delta^2L_f}{2}\right)\right)
	\left(p_{\text{min}}-p_{\text{min}}\alpha\right)},\\[0.2cm]
	&C_2(\gamma, \delta)\triangleq\frac{2TL_B(1+L_B+L_E)}{\gamma\left(c_{\tilde{f}}-\gamma\left(L_f+\frac{\delta^2L_f}{2}
	\right)\right)\left(p_{\text{min}}-p_{\text{min}}\alpha\right)},
	\label{eq:C2}	\end{align}
	$\alpha$ is an arbitrary
	fixed value in $(0;1)$, $\underline{M}_i^k\triangleq\underset{l=k,\ldots,k+T}{\max}|\bar{\underline{\mathcal{K}}}_i^l|$ .\vspace{-0.2cm}
\end{theorem}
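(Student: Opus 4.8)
The plan is to first establish a purely deterministic sufficient-decrease inequality for the Lyapunov function $\tilde F$ in \eqref{lyapunov}, and then to superimpose the probabilistic model to turn this descent into a bound on the stationarity measure $M_F$. For the deterministic part, I would start from the descent lemma applied to $f$ (using A2--A3), writing $F(\mathbf{x}^{k+1})-F(\mathbf{x}^k)$ in terms of the single active block $i^k$ and the step $\gamma(\hat{\mathbf{x}}_{i^k}(\mathbf{x}^{k-\mathbf{d}^k})-\mathbf{x}^k_{i^k})$, with the nonsmooth part controlled by convexity of $g_{i^k}$ (A4). The optimality condition of the strongly convex subproblem \eqref{eq:best-response-convex}, combined with B1 ($c_{\tilde f}$-strong convexity) and the gradient-consistency property B2, yields a genuine descent term $-\gamma c_{\tilde f}\|\hat{\mathbf{x}}_{i^k}-\mathbf{x}^k_{i^k}\|^2$ together with an error term of the form $(\nabla_{i^k}f(\mathbf{x}^k)-\nabla_{i^k}f(\mathbf{x}^{k-\mathbf{d}^k}))^{\text{T}}(\hat{\mathbf{x}}_{i^k}-\mathbf{x}^k_{i^k})$, where I use C3 ($d^k_{i^k}=0$) to identify the updated block's delayed value with its current value. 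Bounding the gradient mismatch via A3 and the delay bound C1 produces a sum of past increments $\|\mathbf{x}^{l+1}-\mathbf{x}^l\|$ over the $\delta$-window; a Young split then shows that the weighted quadratic correction built into $\tilde F$ is exactly what absorbs these cross terms. The coefficient in front of $\|\mathbf{x}^{k+1}-\mathbf{x}^k\|^2$ stays strictly negative precisely under the step-size condition \eqref{eq:max gamma} (the $\delta^2$ there reflecting the double summation over delayed iterates), giving
\[
\tilde F(\mathbf{x}^{k+1},\dots,\mathbf{x}^{k+1-\delta})\le \tilde F(\mathbf{x}^{k},\dots,\mathbf{x}^{k-\delta})-c\,\|\mathbf{x}^{k+1}-\mathbf{x}^k\|_2^2
\]
for some $c>0$ and every realization.

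The second, and hardest, step is to convert this descent into control of $\|M_F(\mathbf{x}^k)\|_2^2$; here the dependence of $\mathbf{d}^k$ on $i^k$ forbids the usual trick of averaging over a block sampled uniformly and independently of the delays. Instead I would argue over windows of length $T$: by C2, for each block $i$ there is a time $t\in[k,k+T-1]$ at which, conditioned on the history $\boldsymbol{\omega}^{0:t-1}$, block $i$ is updated with probability at least $p_{\min}$. Conditioning on ${\cal F}^{t-1}$ and summing the decrease contributed by that update lower-bounds the expected progress by a multiple of $\mathbb{E}\|\hat{\mathbf{x}}_i(\mathbf{x}^{t-\mathbf{d}^t})-\mathbf{x}^t_i\|^2$. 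I would then relate this delayed best-response gap to the synchronous gap $\|\hat{\mathbf{x}}_i(\mathbf{x}^k)-\mathbf{x}^k_i\|$---and hence, after summing over $i$ and using that the best-response fixed points coincide with the zeros of $M_F$, to $\|M_F(\mathbf{x}^k)\|_2$---via the Lipschitz continuity of the best-response map that follows from B3--B4; every discrepancy so introduced is again a sum of increments $\|\mathbf{x}^{l+1}-\mathbf{x}^l\|$ over the window, each dominated by a one-step Lyapunov drop through the inequality above. The bookkeeping of which iterations update block $i$ (the sets $\underline{\mathcal K}_i^k,\bar{\underline{\mathcal K}}_i^k$) and how many times ($\underline M_i^k$) is exactly what produces the weighted telescoped residual $\mathrm B$ and the constants $C_1,C_2$ in \eqref{eq:C1}--\eqref{eq:C2}. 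This window-based conditional induction, replacing independence by the uniform conditional lower bound $p_{\min}$, is the crux of the argument and the part I expect to be most delicate.

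Finally I would assemble the three claims. For (c), on each $k<K_\epsilon$ one has $\mathbb{E}\|M_F(\mathbf{x}^k)\|_2^2>\epsilon$; summing the per-iteration inequality of the previous paragraph over $k=0,\dots,K_\epsilon$ and telescoping the Lyapunov differences---using $\tilde F(\mathbf{x}^0,\dots,\mathbf{x}^{-\delta})=F(\mathbf{x}^0)$ and $\tilde F\ge F^*$---bounds $\epsilon K_\epsilon$ by $C_1(T+1)(F(\mathbf{x}^0)-F^*)$ plus the under-braced residual $\mathrm B$ weighted by $C_2\gamma^2$, which is precisely \eqref{eq:compKbound}. For (b), the deterministic descent makes $\tilde F$ monotone and bounded below (A5 together with $F^*\le\tilde F$), hence convergent, and summability of $\sum_k\|\mathbf{x}^{k+1}-\mathbf{x}^k\|_2^2$ forces the increments---and thus $\tilde F-F$---to vanish, so $\{F(\mathbf{x}^k)\}$ converges. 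For (a), the vanishing increments together with the bounded delays give $\mathbf{x}^{k-\mathbf{d}^k}-\mathbf{x}^k\to\mathbf{0}$; a conditional Borel--Cantelli argument built on the $p_{\min}$ lower bound of C2 shows that almost surely every block is updated infinitely often, so along any convergent subsequence each block is refreshed arbitrarily late, and continuity of the best-response map (from B1--B4) forces $\hat{\mathbf{x}}_i(\mathbf{x}^\star)=\mathbf{x}^\star_i$ for every $i$ at any limit point $\mathbf{x}^\star$, i.e.\ $M_F(\mathbf{x}^\star)=0$.
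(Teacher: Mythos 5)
Your proposal is correct and follows essentially the same route as the paper's proof: the Lyapunov descent inequality absorbing the delayed-gradient cross terms under the step-size condition \eqref{eq:max gamma}, the length-$T$ window argument with the conditional lower bound $p_{\min}$ from C2 replacing any independence assumption, the comparison of the delayed best response to the synchronous proximal map via B3--B4, and the final telescoping that yields \eqref{eq:compKbound}. The only cosmetic difference is that the paper extracts the vanishing of the conditional expectations via the supermartingale convergence theorem rather than a Borel--Cantelli phrasing, but the mechanism is the same.
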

\begin{proof}
	See Appendix.\vspace{-0.2cm}
\end{proof}	

\noindent
The theorem  states that convergence to stationary points occurs a.s. (the objective function values  converge too);  it also gives an estimate of the number of
 iterations $K_\epsilon$ necessary to enforce $\mathbb{E}\left(\|M_F(\mathbf{x}^k)\|_2^2\right)\le \epsilon$. Convergence is guaranteed if, in particular, the step-size is sufficiently small; the bound  \eqref{eq:max gamma} makes this precise. Note that if the method is synchronous,   $\delta =0$,
the bound in \eqref{eq:max gamma}, going like the inverse of the Lipschitz constant,  becomes the renowned conditions  used in many synchronous (proximal-gradient-like) schemes.   The term $\delta^2/2$ in the denominator of  \eqref{eq:max gamma} should then  be seen as the price to pay for asynchrony: the larger the possible delay $\delta$,  the smaller $\gamma$ should be to tolerate such delays. Roughly speaking, this means that the more chaotic the computational environment, the more conservative the step should be, and consequently the smaller the steps of the algorithm are.

The interpretation of the bound $\eqref{eq:compKbound}$ is not immediate,   because of the presence of the term $B$; we now elaborate on it.
If there exists a (deterministic) bound $C$ on $\underline{M}_i^k$, %or equivalently on  $\bar{\underline{M}}_i^k$,
i.e.,   $\underline{M}_i^k \leq C$ for all $k$ and $i$, then  one can write
\begin{align*}
B \, &\leq \, C
\mathbb{E}\left(\sum\limits_{k=0}^{K_\epsilon}\sum\limits_{i=1}^N\sum\limits_{t\in\underline{\mathcal{K}}_i^k}\left(\tilde{F}(\underline{\mathbf{x}}^t,\ldots,\underline{\mathbf{x}}^{t-\delta})-\tilde{F}(\underline{\mathbf{x}}^{t+1},\ldots,\underline{\mathbf{x}}^{t+1-\delta})\right)\right)\\
&\leq\, C (T+\delta)(F(\bx^0)- F^*).
\end{align*}
Therefore, \eqref{eq:compKbound} can be upper bounded as\vspace{-0.2cm}
	\begin{equation}
	K_\epsilon\, \leq\, \Big[ C_1(\gamma, \delta)\cdot(T+1)+C_2(\gamma, \delta)\cdot\gamma^2\cdot C\cdot(T+\delta) \Big]\, \frac{F(\mathbf{x}^0)-F^*}{\epsilon}.
	\label{eq:compKbound_worst}
	\end{equation}
	Recalling the definition of $\underline{M}_i^k$ and  that $|\bar{\underline{\mathcal{K}}}_i^k|$
	is a random variable counting the number of times the index $i$ has been updated in the
iteration window $[k-\delta, k-1]$,   $\underline{M}_i^k \leq \delta$ always holds; therefore,  one can always take $C=\delta$.
 Of course this is a very rough approximation: it is hard to expect that in a
given time window  always the same variable,   $\bar i$,  is updated  and, even if this were the
case, all other $\underline{M}^k_i$, $i\neq \bar i$, would be $0$ and not $\delta$. Consider for example the commonly analyzed ``uniform case'' where  the processing  of every block-variable requires the same time. %whatever the block or the worker processing it. 
In this case one can reasonably take $C=1$ in \eqref{eq:compKbound_worst} {\em independently of the number of workers.} 

This   intuition is corroborated by our experiments, which are  summarized in Table \ref{tabled}.   AsyFLEXA was ran on two different architectures, namely: a shared-memory system  with 10 cores, and a message passing architecture composed of two nodes, with 10 cores each. Two   LASSO problems with $10,000$ variables each were considered, and the variables were equally partitioned across the workers.  In the first LASSO instance, the Hessian matrix was a dense matrix, which models situations where the workload is  equally distributed  across the workers. In the second LASSO problem,  the Hessian matrix had  many sparse rows, to create some unbalancedness in the workers' workload.  Table~\ref{tabled} shows the empirical  average   delay (the average is taken over the  components of the delay vector and time) and the maximum delay $\delta$, estimated in $500$ epochs (one epoch is triggered when all blocks have been updated once). As expected,
 $\delta$ is much larger  than the experienced average delay, confirming that  \eqref{eq:compKbound_worst} with $C=\delta$ is a very conservative bound.
\begin{table}
\begin{center}
\begin{tabular}{|*{4}{c|}}
	\hline
	&Average Delay&$\delta$&\# of cores\\
	\hline
	Multi-core Machine: Balanced Workload &1.11&3&10\\
	\hline
	Multi-core Machine: Unbalanced Workload &2.58&28&10\\
	\hline
	Message Passing System: Balanced Workload&1.87&30&10 per node\\
	\hline
	Message Passing System: Unbalanced Workload&3.01&36&10 per node\\
	\hline
\end{tabular}
\caption{{Average delay and maximum delay $\delta$ for AsyFLEXA,  ran on a multi-core machine and on a message passing system. %Two tested problems: LASSO with dense and sparse Hessian matrix, in order to have balanced and unbalanced workload, respectively, among the computing units.
}}\label{tabled}
\end{center}\vspace{-0.8cm}
\end{table}
While  $C$ can always  be pessimistically upper bounded by $\delta$, a tighter value can be found by tailoring the analysis to the specific  problem and architecture under consideration. 

Finally, we remark the importance of the use, in the complexity analysis, of the $\underline{M}^i_k$,   counting the number of times the index $i$ has been updated in a certain iteration window. The use of these variables  seems to  be a new feature of our analysis.
%that the possibility of having a more significant complexity result, and thus a better understanding of %the behavior of an asynchronous optimization scheme, comes from the fact that we introduced the random %quantities $\underline{M}^i_k$'s to characterize in detail the consequences of asynchrony on an %optimization scheme.
 While getting a sharp estimate for the upper bound $C$ may be difficult in practice, the bound
\eqref{eq:compKbound} gives a good insight into the elements that really influence  the algorithm, showing that what really matters, in some expressions appearing in \eqref{eq:compKbound}, is not  $\delta$, but the usually much smaller number of times the blocks  are actually updated. The use of these variables allows us to get a sharper bound with respect to the case in which one sets $C=\delta$. 
 From this point of view, we believe that typical upper bounds, as those obtained in
\cite{liu2015asynchronous,liu2015asyspcd,davis2016asynchronous,DavisEdmundsUdell,peng2016arock}, where $\delta$ is the only considered ``delay", do not give an accurate description of the actual  
worst-case scenario.}

\noindent \textit{Almost linear speedup:} To study the speedup achievable by the proposed method, %as the number of cores
%increases, %Let us now try to see when we expect a linear time speed-up as the number of cores
%increases. We make, as done
we make two   simplifying assumptions, consistent with those made in the literature, namely: (a) {$\delta$   is proportional to the number of workers}, which is  reasonable in ``symmetric'' situations; and (b)
 $K_{\epsilon}$ is a good proxy for the number of iterations   performed by
the algorithm to reach the desired accuracy.
Choose the stepsize $\gamma$ to be small enough so that
\eqref{eq:max gamma} is always satisfied in the range of values of $\delta$ under consideration; then $C_1(\gamma, \delta)$ and $C_2(\gamma, \delta)$ can be taken to be
constants. Consider now the two summands in square brackets in
\eqref{eq:compKbound_worst}. Without the second term, one would have %If there was only the first term, that is not dependent on
%$\delta$ we would have perfect
ideal linear speed-up. %Because of the second term
%this is not the case, but
However, since one can expect the second term to be much smaller   than the
first (at least
when $\delta$ is not large), an almost linear speed-up can be anticipated. In fact, by \eqref{eq:C1} and  \eqref{eq:C2}, the second term is smaller than the first one, if $\gamma$ is sufficiently small.
In practice, of course, the speed up and in particular the range of the number of workers for which
linear speedup is expected, will be problem and architecture dependent.\vspace{-0.5cm}
%Of course the considerations above should be
%considered as mere indications and the actual speed-up should better be studied numerically.

		\section{Nonconvex Constraints} \label{nonconvex}\vspace{-0.2cm}
      In this section, we remove the assumption that
all constraints are convex, and study the following more general nonconvex constrained  optimization  problem:\vspace{-0.1cm}
		\begin{equation}
		\begin{aligned}
		& \underset{\mathbf{x}}{\min} & & F(\mathbf{x})= f(\mathbf{x})
+ \sum_{i=1}^N g_i(\bx_i)\\[0.3em]
&&&\begin{rcases}\mathbf{x}_i \in \mathcal X_i, \qquad i = 1, \dots , N,		\\[0.3em]
		c_{1}(\mathbf{x}_1)\leq0,\ldots, c_{N}(\mathbf{x}_N)\leq0, 		\end{rcases}\triangleq\mathcal{K}
		\end{aligned}
		\label{ncc_1}\tag{P$^\prime$}\vspace{-0.1cm}
		\end{equation}
		where   $c_{i}(\mathbf{x}_i)
		\leq0$ are   nonconvex private constraints, with $c_i: \mathcal{O}_i\rightarrow \mathbb{R}^{m_i}$, and $\mathcal{O}_i$ denoting  an open set containing $\mathcal{X}_i$; let also define $\mathcal{K}_i\triangleq\{\mathbf{x}_i
		\in\mathcal{X}_i:c_{i}(\mathbf{x}_i)\leq 0,\}$. %be the set of
	%	private constraints of block $i$.
Note that 	$c_{i}(\mathbf{x}_i)$ is a vector function, whose individual component   is denoted by	$c_{i,j}$, with $j =1, \ldots, m_i$.
		Problem  (\ref{ncc_1}) is motivated by several applications in
		signal processing, machine learning, and networking; see,  \cite{scutari2016parallel}
		and references therein for some concrete examples.
		
	%	In order to deal with Problem \eqref{ncc_1} we maintain the same probabilistic setting used so far and  merge the approach in the previous sections
%with that in \cite{facchinei2016feasible,scutari2014distributed}. In a nutshell, the method in
%		\cite{facchinei2016feasible,scutari2014distributed} is a SCA
%		method, as the one considered so far, where not only  the objective function $F$, but also
%		the 		constraints
%$c_{i,j}$ are approximated by convex functions. The convex approximations $\tilde c_{i,j}$ to
%$c_{i,j}$ are further required to be {\em upper} approximations, i.e. $\tilde c_{i,j}(\bx) \geq
%c_{i,j}(\bx)$ so that, starting from an initial feasible point $\bx^0$, the algorithm will only
%generate feasible iterates.  We refer the interested reader to \cite{facchinei2016feasible}  for a more complete discussion of this method.
		
 To deal with nonconvex constraints, we   need some
\textit{regularity} of the constraint functions. Anticipating that all $c_i$ are assumed to be $C^1$ on $\mathcal X_i$, we will use the Mangasarian-Fromovitz Constraint
		Qualification (MFCQ). \vspace{-0.1cm}
	\begin{definition}	A point $\bar{\mathbf{x}}\in\mathcal{K}$ satisfies the   MFCQ if the following implication is
		satisfied:\vspace{-0.1cm}
		\begin{equation}
			\begin{rcases}
\mathbf{0}\in\sum\limits_{i=1}^N\sum\limits_{j\in\bar{J}_i}\mu_{i,j}\nabla_{\mathbf{x}}c_{i,j}(\bar{\mathbf{x}}_i)+N_\mathcal{X}
			(\bar{\mathbf{x}})\\
			\mu_{i,j}\geq 0,\,\forall j\in\bar{J}_i,\,\forall i\in\mathcal{N}
			\end{rcases}
			\Rightarrow\mu_{i,j}=0,\,\forall j\in\bar{J}_i,\,\forall i\in\mathcal{N},
		\end{equation}
		where $N_\mathcal{X}(\bar{\mathbf{x}})\triangleq\{\mathbf{z}\in\mathcal{X}:\mathbf{z}^\text{T}(\mathbf{y}-\bar{\mathbf{x}})\leq0,\,\forall\mathbf{y}\in\mathcal{X}\}$ is the normal cone to $\mathcal{X}$ at $\bar{\mathbf{x}}$, and $\bar{J}_i\triangleq\{j:c_{i,j}(\bar{\mathbf{x}}_i)=0\}$ is the index set of  nonconvex constraints that are active at $\bar{\mathbf{x}}_i$. \end{definition}
			  We study Problem~\eqref{ncc_1} under the following assumptions. \smallskip  	
	
		\noindent \textbf{Assumption A$^\prime$ (On the problem model).} Suppose that
		\begin{description}[topsep=-2.0pt,itemsep=-2.0pt]
\item[(A1$^\prime$)]  Each set $\mathcal{X}_i\subseteq\mathbb{R}^{n_i}$ is nonempty, closed, and convex;\smallskip
	\item[(A2$^\prime$)] $f:\mathcal{O}\rightarrow\mathbb{R}$ is $C^1$, where $\mathcal{O}$ is an open set containing $\mathcal{K}$;\smallskip
	\item[(A3$^\prime$)]  $\nabla_{\mathbf{x}_i} f$ is  $L_{f}$-Lipschitz continuous
	on $\mathcal{K}$;\smallskip
		\item[(A4$^\prime$)] Each $g_{i}:\mathcal{O}_i\rightarrow\mathbb{R}$ is convex, possibly nonsmooth, and $L_g$-Lipschitz continuous on $\mathcal{X}_i$, where $\mathcal{O}_i$ is an open set containing $\mathcal{X}_i$;\smallskip		
			\item[(A5$^\prime$)] $\mathcal{K}$ is a compact set;\smallskip
			\item[(A6$^\prime$)]  Each $c_{i,j}: \mathcal{O}_i\rightarrow \mathbb{R}$ is $C^1$; %on an open set containing $\mathcal{X}_i$;
			\smallskip
			\item[(A7$^\prime$)] All feasible points of problem \eqref{ncc_1} satisfy the MFCQ.
		\end{description}
		
\smallskip

\noindent	 Assumptions A1$^\prime$-A4$^\prime$ are a duplication of A1-A4, repeated here for ease of reference;  A5$^\prime$ is   stronger  than A5, and made  here for the sake of
simplicity (one  could relax it with A5); and  A6$^\prime$ is a standard  differentiability assumption on the non convex constraints $c_{i,j}$.   \smallskip 		
%\noindent We remark that one could relax this assumption and require regularity only at
%specific points, but at the cost of more convoluted statements; we leave this task to the reader.		
			
	%		\subsection{The algorithmic model}
\noindent \textbf{AsyFLEXA-NCC:} We are now ready to introduce our asynchronous algorithmic framework for (\ref{ncc_1}), termed AsyFLEXA-NCC (where NCC stands for Non Convex Constraints).	
 The method is still given by  Algorithm~\ref{alg:global},   with the only difference   that now  %together with the nonconvex  objective function
  also the nonconvex constraints are replaced by suitably chosen convex approximations;  the probabilistic model concerning the choice of the the pair index-delays is the same as the one we used in the case of convex constraints, see Section
		\ref{sec:probabilistic}.
%it is simply  Algorithm 1 wherein the best-response map in (S.2) is
		%replaced by \eqref{ncc_2}; %, in place of \eqref{eq:local model}. We will call
	%	we term it AsyFLEXA-NCC (where NCC stands for Non Convex Constraints).	
%Keeping the   probabilistic model introduced in Section~\ref{sec:probabilistic}, the method mimics Algorithm~\ref{alg:global},   with the only difference
%To deal with the nonconvex constraints our approach, as hinted before,
  % still consists in leveraging SCA techniques:  at each iteration, a suitably chosen ``convex approximation'' of the original problem is solved.  The difference with the method introduced in Section \ref{sec:model} is
 % that now  %together with the nonconvex  objective function
  %also the nonconvex constraints are replaced by suitably chosen convex approximations.
  More specifically, AsyFLEXA-NCC is given by Algorithm~\ref{alg:global} wherein the subproblem (\ref{eq:best-response-convex}) in Step 2    is replaced by
\begin{equation}
		\hat{\textbf{x}}_{i^k}(\textbf{x}^{k-\mathbf{d}^k})\triangleq \underset{\textbf{x}_{i^k} \in \mathcal{K}_{i^k}(\mathbf{x}^k_{i^k})}{\arg\min}
		\left\{{\tilde{F}_{i^k}(\textbf{x}_i;\textbf{x}^{k-\mathbf{d}^k})}\triangleq  \tilde f_{i^k}(\bx_i; {\bx}^{k-\mathbf{d}^k}) + g_{i^k}(\bx_i)\right\},
		\label{ncc_2}
		\end{equation}
		where $\tilde f_{i^k}$ is defined as in (\ref{eq:best-response-convex}); %: \mathcal{X}_{i^k}\times \mathcal{K}\to \mathbb{R}$ is a convex surrogate  of $f$ with respect to $\mathbf{x}_{i^k}$ at $\mathbf{x}^{k-\mathbf{d}^k}$,
		$\mathcal{K}_{i^k}(\mathbf{x}_{i^k}^k)$ is a convex approximation of $\mathcal{K}_{i^k}$ at $\mathbf{x}^{k-\mathbf{d}^k}$, defined as  $$\mathcal{K}_{i^k}(\mathbf{x}^k_{i^k})\triangleq\{\mathbf{x}_i\in\mathcal{X}_{i^k}:\tilde{c}_{i^k,j}(\mathbf{x}_i;\mathbf{x}^k_{i^k})\leq 0,\, j=1, \ldots, m_{i^k}\};$$
		and  $\tilde{c}_{i^k,j}:\mathcal{X}_{i^k}\times\mathcal{K}_{i^k}\rightarrow\mathbb{R}$ is a suitably chosen surrogate of ${c}_{i^k,j}$. Note that  $K_{i^k}$ depends on $\mathbf{x}_{i^k}^{k}$ and not on $\mathbf{x}_{i^k}^{k-d_{i^k}^k}$,  because of Assumption C3 ($d_{i^k}^k=0$).

%\noindent The assumptions we require on the surrogate functions $\tilde{f}_i$ are those of Assumption B, already stated in Section \ref{sec:model}.

		The surrogate functions $\tilde{c}_{i^k,j}$ can be  chosen according to the following assumptions
		($\nabla \tilde{c}_{i,j}$ below denotes the partial gradient of $\tilde{c}_{i,j}$ with respect to the first argument).
		\smallskip

				\noindent \textbf{Assumption D (On the surrogate functions  $\tilde{c}_{i,j}$'s).}
				\begin{description}[topsep=-2.0pt,itemsep=-2.0pt]
					\item[  (D1)] Each $\tilde{c}_{i,j}(\bullet;\mathbf{y})$ $C^1$ on an open set containing $\mathcal{X}_i$, and convex on $\mathcal{X}_i$ for all
					$\mathbf{y}\in\mathcal{K}_i$;\smallskip
					\item[  (D2)] $\tilde{c}_{i,j}(\mathbf{y};\mathbf{y})=c_{i}(\mathbf{y})$, for all $\mathbf{y}\in\mathcal{K}_i$;
					\smallskip
					\item[  (D3)] $c_{i,j}(\mathbf{z})\leq\tilde{c}_{i,j}(\mathbf{z};\mathbf{y})$ for all
					$\mathbf{z}\in\mathcal{X}_i$ and $\mathbf{y}\in\mathcal{K}_i$;\smallskip
					\item[  (D4)] $\tilde{c}_{i,j}(\bullet;\bullet)$ is continuous on $\mathcal{X}_i\times\mathcal{K}_i$;\smallskip
					\item[  (D5)] $\nabla_{\mathbf{y}_i} c_{i,j}(\mathbf{y})=\nabla\tilde{c}_{i,j}(\mathbf{y};\mathbf{y})$, for all
					$\mathbf{y}\in\mathcal{K}_i$;\smallskip
					\item[  (D6)] $\nabla\tilde{c}_{i,j}(\bullet;\bullet)$ is continuous on $\mathcal{X}_i\times\mathcal{K}_i$;
					\smallskip
					\item[ (D7)] Each $\tilde{c}_{i,j}(\bullet;\bullet)$ is Lipschitz continuous on
					$\mathcal{X}_i\times\mathcal{K}_i$.\medskip
				\end{description}
Roughly speaking, Assumption D requires   $\tilde c_{i,j}$ to be an upper convex approximation of $c_{i,j}$ having the same gradient of $c_{i,j}$ at the base point $\bf y$. Finding such   approximations  is less difficult than it might seem at a first sight. Two examples are given below, while we  refer the reader to  \cite{facchinei2016feasible,scutari2016parallel} for a richer list. %and to
%\cite{scutari2016parallel} for some more problem-tailored instances. %Altogether these examples  show that, while certainly not applicable to any problem, the approach we are describing has substantial breadth of applicability. Here we only give two very simple examples to hint at how suitable approximations $\tilde c_{i,j}$ can be defined.

\smallskip

\noindent
$\bullet$
Suppose  $c_{i,j}$ has
a $L_{\nabla c_{i,j}}$-Lipschitz continuous gradient on the (compact) set  $\mathcal{K}_i$. By the Descent Lemma  \cite[Proposition A32]{Bertsekas_Book-Parallel-Comp},  the following convex approximation satisfies Assumption D:\vspace{-0.2cm}
\[
\tilde{c}_{i,j}(\bx;\by)\triangleq c_{i,j}(\by)+\nabla_{\bx}c_{i,j}(\by)\trt(\bx-\by)+
\frac{L_{\nabla c_{i,j}}}{2}\|\bx-\by\|^{2}_2\ge c_{i,j}(\bx).
\]

\noindent
$\bullet$
Suppose that  $c_{i,j}$ has a DC structure, that is,
$
c_{i,j}(\bx)=c_{i,j}^{+}(\bx)-c_{i,j}^{-}(\bx),
$
 $c_{i,j}^{+}$ and $c_{i,j}^{-}$ are two convex and continuously differentiable functions.
 By linearizing the concave part $-c_{i,j}^{-}$
and keeping the convex part $c_{i,j}^{+}$ unchanged, we obtain the
following convex upper approximation of $c_{i,j}$ that satisfies Assumption D:
\[
\tilde{c}_{i,j}(\bx;\by)\triangleq c_{i,j}^{+}(\bx)-c_{i,j}^{-}(\by)-\nabla_{\bx}c_{i,j}^{-}(\by)
\trt(\bx-\by)\ge c_{i,j}(\bx).\
\]
Note   that {the former example is quite general and in principle can be applied to
practically all constraints, even if it could be numerically undesirable if $L_{\nabla c_{i,j}}$ is too large;
the latter example covers, in a possibly more suitable way,  the case of concave constraints.}
\smallskip

		\noindent \textbf{AsyFLEXA-NCC: Convergence.}   In order to gauge convergence,   we
    redefine the stationarity measure $M_F$, to account for the presence of
nonconvex constraints. We use $\|M_F^c(\mathbf{x})\|_2$, with \vspace{-0.2cm}
\[
M_F^c(\mathbf{x})=\mathbf{x}-\underset{\mathbf{y}\in\mathcal{K}_1(\mathbf{x}_1)\times
\ldots\times\mathcal{K}_N(\mathbf{x}_N)}{\text{arg min}}\{\nabla f(\mathbf{x})^\text{T}
(\mathbf{y}-\mathbf{x})+g(\mathbf{y})+\frac{1}{2}\|\mathbf{y}-\mathbf{x}\|^2_2\}.
\]	
 It is a valid merit function: $\|M_F^c(\mathbf{x})\|_2$ is   continuous  and is zero only at
stationary solutions of \eqref{ncc_1} \cite{scutari2014distributed}.

		\begin{theorem}\label{th:ncc}
			Let Problem \eqref{ncc_1} be given, along with  AsyFLEXA-NCC and the stochastic process $\underline{\boldsymbol{\omega}}$. Let $\{\underline{\mathbf{x}}^k\}_{k\in\mathbb{N}_+}$  be the
			sequence generated by the algorithm, given
			$\bx^0\in {\cal K}$.  Suppose that Assumptions A',B-D hold  and that
			$
			\gamma%\leq\frac{c_{\tilde{f}}}{L_f+\frac{\delta^2L_f}{2}}.
			$ is chosen as in (\ref{eq:max gamma}).
			Define $K_\epsilon$ to be the first iteration such that $\mathbb{E}\left(\|M_F^c(\underline{\mathbf{x}}^k)\|_2^2\right)\leq\epsilon$. Then: i) $\underline{\mathbf{x}}^k\in\mathcal{K}_1(\underline{\mathbf{x}}_1^k)\times\ldots\times\mathcal{K}_N(\underline{\mathbf{x}}^N)\subseteq\mathcal{K}$ for all $k\geq0$ (iterate feasibility); and ii) all results in
			Theorem \ref{Theorem_convergence} hold %for AsyFLEXA-NCC applied to
			%Problem \eqref{ncc_1}, provided we use
			with $M_F$ replaced by $M_F^c$. %(instead of $M_F$).
		\end{theorem}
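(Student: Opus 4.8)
The plan is to reduce Theorem~\ref{th:ncc} to the already-established Theorem~\ref{Theorem_convergence}, by showing that replacing the fixed convex sets $\mathcal{X}_{i^k}$ with the iteration-dependent convex inner approximations $\mathcal{K}_{i^k}(\mathbf{x}^k_{i^k})$ neither destroys feasibility nor interferes with the asynchrony analysis. The decisive structural fact is Assumption~C3 ($d_{i^k}^k=0$): the surrogate constraint set in \eqref{ncc_2} is built at the \emph{current} value $\mathbf{x}^k_{i^k}$ of the updated block, not at the delayed one. Hence the constraint approximation is a purely \emph{local} modification of the subproblem that does not couple with the delay vector $\mathbf{d}^k$; the Lyapunov function \eqref{lyapunov}, the delay bookkeeping, the variables $\underline{M}_i^k$, and the whole probabilistic machinery of Theorem~\ref{Theorem_convergence} therefore transfer unchanged, and only the per-iteration descent estimate and the stationarity characterization need to be re-examined.

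For part (i) I would argue by induction on $k$. The base case holds since $\bx^0\in\mathcal{K}$. Assuming $\bx^k\in\mathcal{K}$, Assumption~D2 gives $\tilde{c}_{i^k,j}(\bx^k_{i^k};\bx^k_{i^k})=c_{i^k,j}(\bx^k_{i^k})\le 0$, so $\bx^k_{i^k}\in\mathcal{K}_{i^k}(\bx^k_{i^k})$, while the best response $\hat{\bx}_{i^k}(\bx^{k-\mathbf{d}^k})$ lies in the same set by construction of \eqref{ncc_2}. Since $\mathcal{K}_{i^k}(\bx^k_{i^k})$ is convex (D1) and $\gamma\in(0,1]$, the update \eqref{update-Algo1} keeps block $i^k$ in $\mathcal{K}_{i^k}(\bx^k_{i^k})$, which by the upper-approximation property D3 is contained in $\mathcal{K}_{i^k}$; the remaining blocks are untouched. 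Thus $\bx^{k+1}\in\mathcal{K}$, and the membership $\underline{\bx}^k\in\prod_i\mathcal{K}_i(\underline{\bx}^k_i)$ is immediate from D2.

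For the bulk of part (ii) I would verify that the sufficient-decrease inequality at the heart of the proof of Theorem~\ref{Theorem_convergence} survives. That inequality rests on three ingredients: the strong convexity $c_{\tilde{f}}$ of $\tilde{f}_{i^k}$ together with the gradient consistency B2; the optimality variational inequality for $\hat{\bx}_{i^k}$ over a \emph{convex} feasible set; and the feasibility of the current block $\bx^k_{i^k}$ for that set, so that the zero update is admissible in the descent comparison. The first two are identical to the convex-constraint case, and the third was just established in part (i). None of these steps uses that the feasible set is fixed across iterations, so the same Lyapunov descent and the identical complexity bound \eqref{eq:compKbound} hold with $M_F$ replaced by $M_F^c$, and the almost-sure convergence of $\{F(\underline{\bx}^k)\}$ follows in the same manner.

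The genuinely new step, which I expect to be the main obstacle, is to show that $\|M_F^c(\mathbf{x})\|_2=0$ characterizes stationarity of the \emph{original} nonconvex problem \eqref{ncc_1}, so that conclusion (a) can be phrased for \eqref{ncc_1} rather than for the sequence of convexified subproblems. Here the consistency conditions D2 and D5, matching the value and the gradient of each $\tilde{c}_{i,j}$ with those of $c_{i,j}$ at the base point, are essential: they ensure that the KKT system of the convexified subproblem at a fixed point coincides, in the limit, with the KKT system of \eqref{ncc_1}. Invoking the MFCQ (A7') to exclude degenerate multipliers, together with the continuity granted by D4, D6, and D7, one obtains that $M_F^c$ is a continuous merit function vanishing exactly at stationary points of \eqref{ncc_1}, as recorded via \cite{scutari2014distributed}. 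Combining this characterization with the a.s.\ subsequential convergence inherited from Theorem~\ref{Theorem_convergence} yields statement (a), completing part (ii).
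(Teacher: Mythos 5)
Your proposal is correct and follows essentially the same route as the paper: feasibility by induction via D1--D3 and the convex-combination structure of the update, re-derivation of the Lyapunov sufficient-decrease inequality over the convexified sets $\mathcal{K}_{i^k}(\mathbf{x}^k_{i^k})$ (exploiting C3), and stationarity via the merit function $M_F^c$ together with the results of \cite{scutari2014distributed}. The one point your ``transfers unchanged'' claim glosses over is that the best-response map of \eqref{ncc_2} is only H\"older continuous of order $1/2$ (Proposition~\ref{Prop_best_response_ncc}(b)) rather than Lipschitz, so the step passing from the vanishing block residuals to $\|\hat{\mathbf{x}}(\mathbf{x}^k)-\mathbf{x}^k\|_2\to 0$ needs the modified square-root estimates the paper writes out in \eqref{final_ncc}; this is a routine adjustment that does not affect the conclusion.
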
 \vspace{-0.4cm}
			\begin{proof}  See Appendix~\ref{proof_theorem_2}.  \end{proof}
We are aware of only one other  BCD-asynchronous method  \cite{davis2016asynchronous,DavisEdmundsUdell}  able to deal with nonconvex
constraints. This  method  requires the ability to
find global minima of {\em nonconvex} subproblems while our scheme does not suffer from this drawbacks, {as} it only calls for the
solution of strongly convex subproblems. On the other hand, it needs a feasible starting
point and the ability to build approximations $\tilde c_{i,j}$ satisfying Assumption D. While our
requirements are   easier to be met in practice (and our analysis is based on a grounded probabilistic model), we think that  the two approaches complement
each other   and may cover different applications.\vspace{-0.4cm}

		\section{Conclusions}\vspace{-0.2cm}
		We proposed a novel  model  for the parallel block-descent asynchronous minimization of the sum of a
		nonconvex smooth function and a convex nonsmooth one, subject to nonconvex constraints.
		Our  model captures the essential features of modern multi-core architectures by providing
		a more realistic probabilistic description of asynchrony that that offered by the state of the art. Building on our new probabilistic  model, we proved   sublinear convergence rate of our
		algorithm and a near linear speedup when the  number of workers is not too large.
	 {While we performed some simple numerical tests to validate some of our theoretical findings, extensive simulations are beyond the scope of this paper, and will be the subject of a  subsequent work. Some preliminary numerical results can be found in \cite{cannelli2017asynchronous}.}
 		\vspace{-0.4cm}
	
\bibliographystyle{plain}
\addcontentsline{toc}{chapter}{Bibliography}
%\bibliography{NewRefs,scutari_facchinei_refs,Surbib,Bibliography}
\bibliography{biblioFF}
 \vspace{-0.5cm}

\section{Appendix}	\vspace{-0.2cm}

\subsection{Preliminaries} \vspace{-0.2cm}\label{subsec:preliminary_results}
Hereafter,  we simplify the notation  using   $\tilde{\mathbf{x}}^k\triangleq \mathbf{x}^{k-\mathbf{d}^k}$.% We begin introducing some preliminary results.

\begin{asparaenum}
	\item[\bf 1. On conditional probabilities.]
	%We begin with a simple result concerning conditional expectations in our setting.
	In our developments, we will consider the conditional expectation of random variables $\Z$ on $\Omega$ of the type
	$\E(\Z|{\cal F}^k)$. The following simple fact holds.
	\begin{proposition}
		Let $\mathbf{Z}$ be a random variable defined on $\Omega$, and  %which actually is fully determined by $\boldsymbol{\omega}^{0:k+1}$, and
		let ${\cal F}^k$ be defined in \eqref{eq:sub-sigma}. Then
		\begin{equation}\label{eq:expected value}
			\E\left(\Z|{\cal F}^k\right) = \sum_{(i,\d) \in {\cal N}\times {\cal D}} p\left((i,\mathbf{d}\right)\,|\boldsymbol{\omega}^{0:k})\Z\left((i,\mathbf{d}),\boldsymbol{\omega}^{0:k}\right).
		\end{equation}
	\end{proposition}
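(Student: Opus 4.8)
The plan is to reduce the claim to the elementary formula for conditional expectation with respect to a $\sigma$-algebra generated by a countable partition. Recall from \eqref{eq:sub-sigma} that $\mathcal{F}^k=\sigma(\mathcal{C}^k)$ and that $\mathcal{C}^k$ is a partition of $\Omega$; hence its members, the cylinders $C^k(\boldsymbol{\omega}^{0:k})$, are precisely the atoms of $\mathcal{F}^k$. Moreover, since all delays are bounded by $\delta$ (Assumption C1), the index set $\mathcal{N}\times\mathcal{D}$ is finite, so the sum on the right-hand side of \eqref{eq:expected value} is a finite sum and no convergence issue arises. I would also make explicit the standing reading of the statement: the notation $\Z((i,\mathbf d),\boldsymbol{\omega}^{0:k})$ presupposes that $\Z$ is $\mathcal{F}^{k+1}$-measurable, i.e. constant on each one-step continuation $C^{k+1}(\boldsymbol{\omega}^{0:k},(i,\mathbf d))$, with that constant value denoted $\Z((i,\mathbf d),\boldsymbol{\omega}^{0:k})$; and that $\Z$ is integrable, so that $\E(\Z\mid\mathcal{F}^k)$ is well defined.

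First I would fix an atom $A=C^k(\boldsymbol{\omega}^{0:k})$ with $P(A)>0$ and invoke the standard fact that, for a partition-generated $\sigma$-algebra, $\E(\Z\mid\mathcal{F}^k)$ is constant on $A$ and equals $\E(\Z\mathbf{1}_A)/P(A)$; this is verified directly by checking the defining property $\int_B \E(\Z\mid\mathcal{F}^k)\,dP=\int_B \Z\,dP$ for every $B\in\mathcal{F}^k$, reducing to the atoms by $\sigma$-additivity. Next I would decompose the atom: because $(i,\mathbf d)\in\mathcal N\times\mathcal D$ enumerates the one-step continuations of the history $\boldsymbol{\omega}^{0:k}$, we have the disjoint decomposition $A=\bigcup_{(i,\mathbf d)}C^{k+1}(\boldsymbol{\omega}^{0:k},(i,\mathbf d))$. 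Since $\Z$ takes the constant value $\Z((i,\mathbf d),\boldsymbol{\omega}^{0:k})$ on each piece, finite additivity of $P$ gives $\E(\Z\mathbf{1}_A)=\sum_{(i,\mathbf d)}\Z((i,\mathbf d),\boldsymbol{\omega}^{0:k})\,P(C^{k+1}(\boldsymbol{\omega}^{0:k},(i,\mathbf d)))$. Dividing by $P(A)=P(C^k(\boldsymbol{\omega}^{0:k}))$ and recognizing each ratio $P(C^{k+1}(\boldsymbol{\omega}^{0:k},(i,\mathbf d)))/P(C^k(\boldsymbol{\omega}^{0:k}))$ as the conditional probability $p((i,\mathbf d)\mid\boldsymbol{\omega}^{0:k})$ of \eqref{eq:conditional prob} yields exactly \eqref{eq:expected value} on $A$.

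Finally I would dispose of the atoms with $P(A)=0$: their union is a $P$-null set on which $\E(\Z\mid\mathcal{F}^k)$ may be defined arbitrarily, and the convention stated after \eqref{eq:conditional prob} sets $p((i,\mathbf d)\mid\boldsymbol{\omega}^{0:k})=0$ there, so the identity \eqref{eq:expected value} holds $P$-a.s., which is all that a conditional expectation determines. I do not expect a genuine obstacle: the result is essentially the discrete/partition case of conditional expectation, and the only points requiring care are bookkeeping ones — confirming that the cylinders are the atoms of $\mathcal{F}^k$, that the one-step continuations partition each atom, and that the $\mathcal{F}^{k+1}$-measurability implicit in the notation is what makes the right-hand side well defined. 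The mild formal subtlety is justifying the explicit atomic formula for $\E(\cdot\mid\mathcal{F}^k)$ against its abstract (testing-property) definition, which I would settle once via the identity above rather than re-deriving it per atom.
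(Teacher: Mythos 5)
Your argument is correct and follows essentially the same route as the paper's proof: both rest on the atomic formula $\E(\Z\mid\mathcal{F}^k)=\E(\Z\mathbf{1}_{C^k(\boldsymbol{\omega}^{0:k})})/\mathbb{P}(C^k(\boldsymbol{\omega}^{0:k}))$ for the partition-generated $\sigma$-algebra, followed by decomposing each cylinder into its one-step continuations and invoking \eqref{eq:conditional prob}. You simply spell out the details the paper leaves implicit (verifying the atomic formula rather than citing it, making the $\mathcal{F}^{k+1}$-measurability of $\Z$ explicit, and treating null atoms), which is fine.
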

	\begin{proof}
		Recall that ${\cal F}^k$ is the $\sigma$-algebra generated by ${\cal C}^{k}$, which is a finite partition of $\Omega$. Therefore,  		 one can write \cite[Example 5.1.3]{durrett2010probability}
		$$
		\E\left(\Z|{\cal F}^k\right) = \frac{\E\left(\Z;C^k(\boldsymbol{\omega}^{0:k})\right)}{\mathbb{P}\left(C^k(\boldsymbol{\omega}^{0:k})\right)}.%\qquad
		%\mbox{\rm on}\; C^k(\boldsymbol{\omega}^{0:k}).
		$$
%		Given $\omega\in \Omega$, we have $\omega \in C^k(\boldsymbol{\omega}^{0:k})$ and therefore,
The thesis   follows readily from \eqref{eq:conditional prob} and the fact that $\mathbf{Z}$   depends only on $\boldsymbol{\omega}^{0:k+1}$ and  takes a finite number of values.
	\end{proof}
	
	\item[\bf 2.  Properties of the best response $\hat{\mathbf{x}}(\cdot)$.]  We introduce next some basic properties of the best-response  maps defined in \eqref{eq:best-response-convex} and \eqref{ncc_2}.\smallskip
	
	\begin{proposition}[\cite{FLEXA}] \label{Prop_best_response} Given the best-response map $\hat{\mathbf{x}}(\cdot)\triangleq(\hat{\mathbf{x}}_i(\cdot))_{i=1}^N$, with  $\hat{\mathbf{x}}_i(\cdot)$   defined in \eqref{eq:best-response-convex}. Under Assumptions A-B, the following hold.
		\begin{description}
			
			\item [{(a) [Optimality]:}] For any $i \in \mathcal{N}$ and $\mathbf{y} \in \mathcal{X}$,
			\begin{equation}
				(\hat{\mathbf{x}}_i(\mathbf{y}) - \mathbf{y}_i)^T \nabla_{\mathbf{y}_i} f(\mathbf{y}) + g_i(\hat{\mathbf{x}}_i(\mathbf{y})) - g_i(\mathbf{y}_i) \leq - c_{\tilde{f}}\|\hat{\mathbf{x}}_i(\mathbf{y}) - \mathbf{y}_i\|_2^2 \,;
				\label{eq:lemma4}
			\end{equation}
			\item [{(b) [Lipschitz continuity]:}] For any $i \in \mathcal{N}$ and $\mathbf{y}, \mathbf{z}\in \mathcal{X}$,
			\begin{equation}
				\|\hat{\mathbf{x}}_i(\mathbf{y}) - \hat{\mathbf{x}}_i(\mathbf{z})\|_2 \leq L_{\hat{\mathbf{x}}}\|\mathbf{y} - \mathbf{z}\|_2,
			\end{equation}
			with $L_{\hat{\mathbf{x}}}={L_B}/{c_{\tilde{f}}}$;
			\item [{(c) [Fixed-point characterization]: }] The set of fixed-points of $\hat{\mathbf{x}}(\cdot)$ coincides with the set of stationary solutions of Problem~\eqref{eq:optimization_problem}. Therefore $\hat{\mathbf{x}}(\cdot)$ has at least one fixed point.
		\end{description}\smallskip
	\end{proposition}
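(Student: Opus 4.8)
The plan is to derive all three properties from the first-order optimality conditions of the strongly convex subproblem \eqref{eq:best-response-convex}, combined with the gradient-consistency and Lipschitz requirements in Assumption B. Throughout I would fix $i$, write $\hat{\mathbf{x}}_i(\mathbf{y})$ for the (unique, by B1) minimizer, and record its variational characterization: there exists $\xi\in\partial g_i(\hat{\mathbf{x}}_i(\mathbf{y}))$ with $(\nabla\tilde f_i(\hat{\mathbf{x}}_i(\mathbf{y});\mathbf{y})+\xi)^T(\mathbf{w}-\hat{\mathbf{x}}_i(\mathbf{y}))\geq 0$ for all $\mathbf{w}\in\mathcal{X}_i$. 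For part (a), I would test this inequality with $\mathbf{w}=\mathbf{y}_i$ and bound the subgradient contribution by convexity of $g_i$, yielding $\nabla\tilde f_i(\hat{\mathbf{x}}_i(\mathbf{y});\mathbf{y})^T(\hat{\mathbf{x}}_i(\mathbf{y})-\mathbf{y}_i)+g_i(\hat{\mathbf{x}}_i(\mathbf{y}))-g_i(\mathbf{y}_i)\le 0$. The $c_{\tilde f}$-strong convexity in B1 then lets me replace $\nabla\tilde f_i(\hat{\mathbf{x}}_i(\mathbf{y});\mathbf{y})$ by $\nabla\tilde f_i(\mathbf{y}_i;\mathbf{y})$ at the cost of an additive $c_{\tilde f}\|\hat{\mathbf{x}}_i(\mathbf{y})-\mathbf{y}_i\|_2^2$ term, and the identity B2, $\nabla\tilde f_i(\mathbf{y}_i;\mathbf{y})=\nabla_{\mathbf{y}_i}f(\mathbf{y})$, converts this into exactly \eqref{eq:lemma4}.

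For part (b), I would write the optimality variational inequalities at the two base points $\mathbf{y}$ and $\mathbf{z}$, test the first with $\mathbf{w}=\hat{\mathbf{x}}_i(\mathbf{z})$ and the second with $\mathbf{w}=\hat{\mathbf{x}}_i(\mathbf{y})$, and add them. Monotonicity of $\partial g_i$ removes the subgradient terms with the favorable sign, leaving $(\nabla\tilde f_i(\hat{\mathbf{x}}_i(\mathbf{z});\mathbf{z})-\nabla\tilde f_i(\hat{\mathbf{x}}_i(\mathbf{y});\mathbf{y}))^T(\hat{\mathbf{x}}_i(\mathbf{z})-\hat{\mathbf{x}}_i(\mathbf{y}))\le 0$. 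The crux, and the step I expect to be the main obstacle, is to split this single difference by inserting $\mp\nabla\tilde f_i(\hat{\mathbf{x}}_i(\mathbf{y});\mathbf{z})$: the piece that shares the common second argument $\mathbf{z}$ is bounded below by $c_{\tilde f}\|\hat{\mathbf{x}}_i(\mathbf{z})-\hat{\mathbf{x}}_i(\mathbf{y})\|_2^2$ via strong convexity (B1), while the remaining piece, which varies only the second argument, is controlled by $L_B\|\mathbf{z}-\mathbf{y}\|_2\,\|\hat{\mathbf{x}}_i(\mathbf{z})-\hat{\mathbf{x}}_i(\mathbf{y})\|_2$ via B3 and Cauchy--Schwarz. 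Combining the two bounds and dividing through by $\|\hat{\mathbf{x}}_i(\mathbf{z})-\hat{\mathbf{x}}_i(\mathbf{y})\|_2$ gives the claimed constant $L_{\hat{\mathbf{x}}}=L_B/c_{\tilde f}$. The delicate bookkeeping is matching the right argument in each gradient (second argument common for B1, first argument common for B3) so that precisely these two assumptions apply.

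For part (c), I would again invoke B2: the condition $\hat{\mathbf{x}}_i(\mathbf{y})=\mathbf{y}_i$ for every $i$ holds exactly when each $\mathbf{y}_i$ solves its subproblem, whose optimality variational inequality, after substituting $\nabla\tilde f_i(\mathbf{y}_i;\mathbf{y})=\nabla_{\mathbf{y}_i}f(\mathbf{y})$, coincides with the stationarity condition of \eqref{eq:optimization_problem}; the equivalence is then immediate in both directions, with uniqueness of the strongly convex minimizer supplying the converse implication. Finally, Assumptions A1, A2, and A5 (closedness, continuity, coercivity) guarantee that \eqref{eq:optimization_problem} has a global minimizer, which is necessarily stationary, so by the equivalence just established the fixed-point set of $\hat{\mathbf{x}}(\cdot)$ is nonempty.
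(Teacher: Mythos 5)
Your proof is correct and follows the standard argument: the paper itself does not prove this proposition but cites it from \cite{FLEXA}, and your derivation (variational-inequality characterization of the minimizer, strong monotonicity of $\nabla\tilde f_i(\bullet;\mathbf{y})$ from B1, the gradient-consistency B2 for parts (a) and (c), and the B1/B3 splitting of the gradient difference for part (b)) is precisely the route taken in that reference. The only point worth noting is that existence of a fixed point in (c) also uses A5 (coercivity) to guarantee a global, hence stationary, minimizer exists, which you correctly include.
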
	
	\begin{proposition}[\cite{scutari2014distributed}]
		\label{Prop_best_response_ncc}
		Given the best-response map  $\hat{\mathbf{x}}(\cdot)\triangleq(\hat{\mathbf{x}}_i(\cdot))_{i=1}^N$, with   $\hat{\mathbf{x}}_i(\cdot)$   defined in \eqref{ncc_2}. Under Assumptions A$^\prime$-B-D,  the following hold.
		\begin{description}
			
			\item [{(a) [Optimality]:}] For any $i \in \mathcal{N}$ and $\mathbf{y} \in \mathcal{K}$,
			\begin{equation}
			(\hat{\mathbf{x}}_i(\mathbf{y}) - \mathbf{y}_i)^T \nabla_{\mathbf{y}_i} f(\mathbf{y}) + g_i(\hat{\mathbf{x}}_i(\mathbf{y})) - g_i(\mathbf{y}_i) \leq - c_{\tilde{f}}\|\hat{\mathbf{x}}_i(\mathbf{y}) - \mathbf{y}_i\|_2^2 \,;
			\end{equation}
			\item [{(b) [Lipschitz continuity]:}] For any $i \in \mathcal{N}$ and $\mathbf{y}, \mathbf{z}\in \mathcal{K}$,
			\begin{equation}
			\|\hat{\mathbf{x}}_i(\mathbf{y}) - \hat{\mathbf{x}}_i(\mathbf{z})\|_2 \leq \tilde{L}_{\hat{\mathbf{x}}}\|\mathbf{y} - \mathbf{z}\|_2^{1/2},
			\end{equation}
			with $\tilde{L}_{\hat{\mathbf{x}}}>0$.
		\end{description}\smallskip
	\end{proposition}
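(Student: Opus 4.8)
The plan is to prove the two parts separately, using throughout that in this proposition the surrogate objective and the surrogate constraint set share the same base point $\mathbf{y}$, so that $\hat{\mathbf{x}}_i(\mathbf{y})$ is the unique minimizer of the $c_{\tilde f}$-strongly convex composite function $\tilde f_i(\cdot;\mathbf{y})+g_i(\cdot)$ over the convex set $\mathcal{K}_i(\mathbf{y}_i)$; convexity of $\mathcal{K}_i(\mathbf{y}_i)$ follows from D1 and the convexity of $\mathcal{X}_i$.

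Part (a) is essentially mechanical and parallels Proposition~\ref{Prop_best_response}(a). First I would note that $\mathbf{y}_i$ is itself feasible for the subproblem defining $\hat{\mathbf{x}}_i(\mathbf{y})$: since $\mathbf{y}\in\mathcal{K}$ we have $c_{i,j}(\mathbf{y}_i)\le 0$, and D2 gives $\tilde c_{i,j}(\mathbf{y}_i;\mathbf{y}_i)=c_{i,j}(\mathbf{y}_i)\le 0$, whence $\mathbf{y}_i\in\mathcal{K}_i(\mathbf{y}_i)$. The quadratic-growth characterization of the minimizer of a $c_{\tilde f}$-strongly convex function over a convex set, evaluated at the feasible point $\mathbf{y}_i$, yields $[\tilde f_i(\mathbf{y}_i;\mathbf{y})+g_i(\mathbf{y}_i)]-[\tilde f_i(\hat{\mathbf{x}}_i(\mathbf{y});\mathbf{y})+g_i(\hat{\mathbf{x}}_i(\mathbf{y}))]\ge \tfrac{c_{\tilde f}}{2}\|\hat{\mathbf{x}}_i(\mathbf{y})-\mathbf{y}_i\|_2^2$. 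Independently, strong convexity of $\tilde f_i(\cdot;\mathbf{y})$ together with the gradient-consistency B2, $\nabla\tilde f_i(\mathbf{y}_i;\mathbf{y})=\nabla_{\mathbf{y}_i}f(\mathbf{y})$, gives $\tilde f_i(\hat{\mathbf{x}}_i(\mathbf{y});\mathbf{y})-\tilde f_i(\mathbf{y}_i;\mathbf{y})\ge \nabla_{\mathbf{y}_i}f(\mathbf{y})^T(\hat{\mathbf{x}}_i(\mathbf{y})-\mathbf{y}_i)+\tfrac{c_{\tilde f}}{2}\|\hat{\mathbf{x}}_i(\mathbf{y})-\mathbf{y}_i\|_2^2$. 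Substituting the second bound into the first and cancelling the $\tilde f_i$ difference produces exactly the claimed inequality, with the full constant $c_{\tilde f}$ (as in \eqref{eq:lemma4}).

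Part (b) is the genuinely hard direction, and the reason the exponent drops to $1/2$ (as opposed to the Lipschitz estimate of Proposition~\ref{Prop_best_response}(b)) is that the feasible set $\mathcal{K}_i(\cdot)$ now moves with the base point, so both objective and constraints are perturbed from $\mathbf{y}$ to $\mathbf{z}$. I would argue via the first-order variational characterizations of the two minimizers $\hat{\mathbf{y}}\triangleq\hat{\mathbf{x}}_i(\mathbf{y})$ over $\mathcal{K}_i(\mathbf{y}_i)$ and $\hat{\mathbf{z}}\triangleq\hat{\mathbf{x}}_i(\mathbf{z})$ over $\mathcal{K}_i(\mathbf{z}_i)$. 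Since $\hat{\mathbf{z}}\notin\mathcal{K}_i(\mathbf{y}_i)$ and $\hat{\mathbf{y}}\notin\mathcal{K}_i(\mathbf{z}_i)$ in general, I would introduce feasible proxies $\tilde{\mathbf{z}}\in\mathcal{K}_i(\mathbf{y}_i)$ and $\tilde{\mathbf{y}}\in\mathcal{K}_i(\mathbf{z}_i)$ for $\hat{\mathbf{z}}$ and $\hat{\mathbf{y}}$, test each optimality condition at the corresponding proxy, and add the two inequalities. Strong monotonicity of $\nabla\tilde f_i(\cdot;\mathbf{y})$ (from B1) produces a term $-c_{\tilde f}\|\hat{\mathbf{y}}-\hat{\mathbf{z}}\|_2^2$; the base-point gradient mismatch is controlled by B3 as $L_B\|\mathbf{y}-\mathbf{z}\|_2\,\|\hat{\mathbf{y}}-\hat{\mathbf{z}}\|_2$; the $g_i$ contributions evaluated at $\hat{\mathbf{y}},\hat{\mathbf{z}}$ telescope to zero, leaving only $g_i$ differences across the proxies, bounded by $L_g(\|\tilde{\mathbf{z}}-\hat{\mathbf{z}}\|_2+\|\tilde{\mathbf{y}}-\hat{\mathbf{y}}\|_2)$ via A4$^\prime$; and the remaining inner products of the (compactly bounded, by A5$^\prime$) gradients with the proxy displacements are likewise $O(\|\tilde{\mathbf{z}}-\hat{\mathbf{z}}\|_2+\|\tilde{\mathbf{y}}-\hat{\mathbf{y}}\|_2)$. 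This leaves a scalar quadratic inequality $c_{\tilde f}\,u^2\le L_B\,\|\mathbf{y}-\mathbf{z}\|_2\,u + M\,(\|\tilde{\mathbf{z}}-\hat{\mathbf{z}}\|_2+\|\tilde{\mathbf{y}}-\hat{\mathbf{y}}\|_2)$ in $u\triangleq\|\hat{\mathbf{y}}-\hat{\mathbf{z}}\|_2$, for a constant $M$.

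The main obstacle, and the source of the square root, is the feasible-set error bound used to build the proxies: I must produce $\tilde{\mathbf{y}}\in\mathcal{K}_i(\mathbf{z}_i)$ with $\|\tilde{\mathbf{y}}-\hat{\mathbf{y}}\|_2\le\kappa\,\|\mathbf{y}-\mathbf{z}\|_2$ (and symmetrically for $\tilde{\mathbf{z}}$), with $\kappa$ uniform over base points in the compact set $\mathcal{K}$. The estimate starts from D7, which gives $\tilde c_{i,j}(\hat{\mathbf{y}};\mathbf{z}_i)\le \tilde c_{i,j}(\hat{\mathbf{y}};\mathbf{y}_i)+L_{\tilde c}\|\mathbf{y}_i-\mathbf{z}_i\|_2\le L_{\tilde c}\|\mathbf{y}-\mathbf{z}\|_2$, so $\hat{\mathbf{y}}$ violates the $\mathbf{z}_i$-system by at most $O(\|\mathbf{y}-\mathbf{z}\|_2)$; a Lipschitzian (metric-regularity) error bound for the perturbed convex system then converts this violation into the distance estimate. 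Uniform validity of this error bound is exactly where A7$^\prime$ (MFCQ at every feasible point) and A5$^\prime$ (compactness) enter, the link being D5--D6: the surrogate gradients agree with those of the $c_{i,j}$ at the base point, so MFCQ for the true constraints transfers to a uniform constraint qualification for the surrogate sets by a compactness/continuity argument; this is precisely the regularity analysis of \cite{scutari2014distributed}, on which I would rely for the quantitative bound. Feeding $\|\tilde{\mathbf{z}}-\hat{\mathbf{z}}\|_2,\|\tilde{\mathbf{y}}-\hat{\mathbf{y}}\|_2=O(\|\mathbf{y}-\mathbf{z}\|_2)$ into the quadratic inequality gives $c_{\tilde f}u^2\le L_B\|\mathbf{y}-\mathbf{z}\|_2\,u+O(\|\mathbf{y}-\mathbf{z}\|_2)$, whose solution is $u=O(\|\mathbf{y}-\mathbf{z}\|_2^{1/2})$; since on the bounded set $\mathcal{K}$ the linear term is itself $O(\|\mathbf{y}-\mathbf{z}\|_2^{1/2})$, this yields the claimed $\tfrac12$-Hölder continuity with some $\tilde L_{\hat{\mathbf{x}}}>0$.
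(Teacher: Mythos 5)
The paper itself offers no proof of this proposition: it is imported wholesale from \cite{scutari2014distributed}, exactly as Proposition~\ref{Prop_best_response} is imported from \cite{FLEXA}, so your attempt can only be measured against that reference's analysis rather than against anything in this paper. Your part (a) is complete, correct, and self-contained: feasibility of $\mathbf{y}_i$ for the subproblem in \eqref{ncc_2} via D2, quadratic growth of the $c_{\tilde{f}}$-strongly convex objective at the constrained minimizer, and strong convexity combined with the gradient consistency B2 yield, after cancellation of the $\tilde f_i$ differences, the claimed inequality with the full constant $c_{\tilde{f}}$; this is the same standard argument that underlies \eqref{eq:lemma4}. Your part (b) follows the same route as the cited reference: two variational inequalities tested at feasible proxies, splitting the gradient difference into a strongly monotone part (giving $-c_{\tilde{f}}u^2$ with $u\triangleq\|\hat{\mathbf{x}}_i(\mathbf{y})-\hat{\mathbf{x}}_i(\mathbf{z})\|_2$) and a base-point mismatch (giving $L_B\|\mathbf{y}-\mathbf{z}\|_2\,u$ via B3), control of the $g_i$ terms and of the bounded-gradient inner products by the proxy displacements, and resolution of the resulting quadratic inequality, which on the compact set $\mathcal{K}$ indeed produces the $1/2$-H\"older estimate. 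The one step that is not self-contained is precisely the crux: the uniform Lipschitzian error bound $\|\tilde{\mathbf{y}}-\hat{\mathbf{y}}\|_2\le\kappa\,\|\mathbf{y}-\mathbf{z}\|_2$ converting the $O(\|\mathbf{y}-\mathbf{z}\|_2)$ constraint violation obtained from D7 into a distance to the perturbed set $\mathcal{K}_i(\mathbf{z}_i)$, with $\kappa$ uniform over base points. You correctly identify where A5$^\prime$, A7$^\prime$ and D5--D6 enter (MFCQ for the original constraints transfers to a constraint qualification for the surrogate system because the two share gradients at the base point, and compactness gives uniformity), but you invoke the regularity analysis of \cite{scutari2014distributed} rather than proving it. Since the paper delegates the \emph{entire} proposition to that same reference, this is not a gap relative to the paper; just be aware that this Robinson/Hoffman-type error bound for the perturbed convex system is the only genuinely hard part of the statement, and without it the square root in part (b) has no quantitative basis.
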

	
	\item[\bf 3. Young's Inequality \cite{young1912classes}.] For any   $\alpha, \mu_1, \mu_2 > 0$, there holds	\begin{equation}
		\mu_1\mu_2 \leq \frac{1}{2}(\alpha\mu_1^2 + \alpha^{-1}\mu_2^2).
		\label{eq:prel_3}
	\end{equation}

	\item[\bf 4. Representation of $\tilde{\mathbf{x}}^k$.]
Since at each iteration only one block of variables is updated,  $\tilde{\mathbf{x}}^k$ can be written as
	\begin{equation}
	\tilde{\mathbf{x}}^k_i=\mathbf{x}^k_i+\sum\limits_{l\in\bar{\mathcal{K}}_i^k}(\mathbf{x}^l_i-\mathbf{x}^{l+1}_i),
	\label{x_tilde}
	\end{equation}
	where $\bar{\mathcal{K}}^k_i$ is defined in Section~\ref{sec:convergence}.\end{asparaenum}

\subsection{Proof of Theorem \ref{Theorem_convergence}}\vspace{-0.3cm}
In this section, the best-response map $\hat{\mathbf{x}}(\cdot)$  is the one defined in \eqref{eq:best-response-convex}.

	\noindent For any given realization $\omega\in\Omega$ and $k\geq0$, the following holds:\vspace{-0.1cm}
	\begin{align}
	&\nonumber
	F(\mathbf{x}^{k+1}) \,
	=\, f(\mathbf{x}^{k+1}) + g(\mathbf{x}^{k+1})\\
&\nonumber	
	\stackrel{\text{(a)}}{=}\, f(\mathbf{x}^{k+1}) + \sum\limits_{i \neq i^k} g_i(\mathbf{x}_i^k) + g_{i^k}(\mathbf{x}_{i^k}^{k+1})\\
	&\nonumber
	\stackrel{\text{(b)}}{\leq} %f(\mathbf{x}^k)  +\gamma\nabla_{\mathbf{x}_{i^k}} f (\mathbf{x}^k)^\text{T}(\hat{\mathbf{x}}_{i^k}(\tilde{\mathbf{x}}^k) - \mathbf{x}_{i^k}^k)+ \frac{\gamma^2L_f}{2}\|\hat{\mathbf{x}}_{i^k}(\tilde{\mathbf{x}}^k) - \mathbf{x}_{i^k}^k\|_2^2\\
	%&\nonumber
	%\quad + \sum\limits_{i \neq i^k} g_i(\mathbf{x}_i^k)+ g_{i^k}(\mathbf{x}_{i^k}^{k+1})\,
	%=
	\,f(\mathbf{x}^k) + \gamma\nabla_{\mathbf{x}_{i^k}} f (\tilde{\mathbf{x}}^k)^\text{T}(\hat{\mathbf{x}}_{i^k}(\tilde{\mathbf{x}}^k) - \mathbf{x}_{i^k}^k)+ \sum\limits_{i \neq i^k} g_i(\mathbf{x}_i^k)+ g_{i^k}(\mathbf{x}_{i^k}^{k+1})\\
	&\nonumber
	\quad \,+(\nabla_{\mathbf{x}_{i^k}} f (\mathbf{x}^k)-\nabla_{\mathbf{x}_{i^k}}f(\tilde{\mathbf{x}}^k))^\text{T}(\gamma(\hat{\mathbf{x}}_{i^k}(\tilde{\mathbf{x}}^k) - \mathbf{x}_{i^k}^k))+\frac{\gamma^2L_f}{2}\,\|\hat{\mathbf{x}}_{i^k}(\tilde{\mathbf{x}}^k) - \mathbf{x}_{i^k}^k\|_2^2\\
	%&\nonumber
	%+ \sum\limits_{i \neq i^k} g_i(\mathbf{x}_i^k)+ g_{i^k}(\mathbf{x}_{i^k}^{k+1})\\
	&\nonumber
	\stackrel{\text{(c)}}{\leq}\, f(\mathbf{x}^k)  +\gamma\nabla_{\mathbf{x}_{i^k}} f (\tilde{\mathbf{x}}^k)^\text{T}(\hat{\mathbf{x}}_{i^k}(\tilde{\mathbf{x}}^k) - \tilde{\mathbf{x}}_{i^k}^k)\\
	&\nonumber
	\quad \,+(\nabla_{\mathbf{x}_{i^k}}f(\mathbf{x}^k)-\nabla_{\mathbf{x}_{i^k}}f(\tilde{\mathbf{x}}^k))^\text{T}(\gamma(\hat{\mathbf{x}}_{i^k}(\tilde{\mathbf{x}}^k) - \tilde{\mathbf{x}}_{i^k}^k))+ \frac{\gamma^2L_f}{2}\,\|\hat{\mathbf{x}}_{i^k}(\tilde{\mathbf{x}}^k) - \tilde{\mathbf{x}}_{i^k}^k\|_2^2\\
	&\nonumber
	\quad \,+ \sum\limits_{i \neq i^k} g_i(\mathbf{x}_i^k) + \gamma g_{i^k}(\hat{\mathbf{x}}_{i^k}(\tilde{\mathbf{x}}^k))+g_{i^k}(\mathbf{x}_{i^k}^k)-\gamma g_{i^k}(\tilde{\mathbf{x}}_{i^k}^k)
	\\
	&\nonumber
	\stackrel{\text{(d)}}{\leq} F(\mathbf{x}^k) -\gamma\,\left(c_{\tilde{f}}-\frac{\gamma L_f}{2}\right)\,\|\hat{\mathbf{x}}_{i^k}(\tilde{\mathbf{x}}^k)-\tilde{\mathbf{x}}_{i^k}^k\|_2^2\\
	&\nonumber\quad\,+L_f\,\|\mathbf{x}^k-\tilde{\mathbf{x}}^k\|_2\|\gamma(\hat{\mathbf{x}}_{i^k}(\tilde{\mathbf{x}}^k) - \tilde{\mathbf{x}}_{i^k}^k)\|_2\\
	&\nonumber
	\stackrel{\text{(e)}}{\leq} F(\mathbf{x}^k)-\gamma\,\left(c_{\tilde{f}}-\gamma L_f\right)\,\|\hat{\mathbf{x}}_{i^k}(\tilde{\mathbf{x}}^k)-\tilde{\mathbf{x}}_{i^k}^k\|_2^2+\frac{L_f}{2}\,\|\mathbf{x}^k-\tilde{\mathbf{x}}^k\|_2^2\\
	& \label{descent}
	\stackrel{\text{(f)}}{=} F(\mathbf{x}^k)-\gamma\,\left(c_{\tilde{f}}-\gamma L_f\right)\,\|\hat{\mathbf{x}}_{i^k}(\tilde{\mathbf{x}}^k)-\mathbf{x}_{i^k}^k\|_2^2+\frac{L_f}{2}\,\|\mathbf{x}^k-\tilde{\mathbf{x}}^k\|_2^2,
		\end{align}
	where  (a) follows from the updating rule of the algorithm; in (b) we used the Descent Lemma on $f$; (c) comes from the convexity of $g_i$ and C3; in (d) we used Proposition~\ref{Prop_best_response} and A3; (e) is due to the Young's inequality; and (f) is due to  C3.
	
	We bound $\|\mathbf{x}^k-\tilde{\mathbf{x}}^k\|_2^2$ as follows:\vspace{-0.2cm}
	\begin{align}
&\nonumber	\|\mathbf{x}^k-\tilde{\mathbf{x}}^k\|_2^2\,
	\stackrel{\text{(a)}}{\leq}\,\left(\sum\limits_{l=k-\delta}^{k-1}\|\mathbf{x}^{l+1}-\mathbf{x}^l\|_2\right)^2\,
	\stackrel{\text{(b)}}{\leq}\,\delta\sum\limits_{l=k-\delta}^{k-1}\|\mathbf{x}^{l+1}-\mathbf{x}^l\|_2^2\\
	& \label{eq:newbound1_multi}
	%=\delta\,\left(\sum\limits_{l=k-\delta}^{k-1}(l-(k-1)+\delta\right)\,\|\mathbf{x}^{l+1}-\mathbf{x}^l\|_2^2-\sum\limits_{l=k+1-\delta}^{k}(l-k+\delta)\|\mathbf{x}^{l+1}-\mathbf{x}^l\|_2^2)\\
	%& \label{eq:newbound1_multi} \quad  +\delta^2\|\mathbf{x}^{k+1}-\mathbf{x}^k\|^2_2\\
	%&\nonumber
	=\delta\left(\sum\limits_{l=k-\delta}^{k-1}\left(l-(k-1)+\delta\right)\,\|\mathbf{x}^{l+1}-\mathbf{x}^l\|_2^2 -\sum\limits_{l=k+1-\delta}^{k}\left(l-k+\delta\right)\,\|\mathbf{x}^{l+1}-\mathbf{x}^l\|_2^2\right)\\
	&\nonumber \quad+\delta^2\gamma^2\,\|\hat{\mathbf{x}}_{i^k}(\tilde{\mathbf{x}}^k)-\mathbf{x}_{i^k}^k\|_2^2,
	\end{align}
	where (a) comes from \eqref{x_tilde}; and (b) is due to the Jensen's inequality.
	
	Using \eqref{eq:newbound1_multi} in \eqref{descent}, the Lyapunov function \eqref{lyapunov}, and rearranging the terms, the following holds: for all $k\geq0$,
\begin{align}
	&\nonumber\tilde{F}(\mathbf{x}^{k+1}\ldots,\mathbf{x}^{k+1-\delta})
	\\&\leq \tilde{F}(\mathbf{x}^k,\ldots,\mathbf{x}^{k-\delta})-\gamma\,\left(c_{\tilde{f}}-\gamma\,\left(L_f+\frac{\delta^2L_f}{2}\right)\right)\,\|\hat{\mathbf{x}}_{i^k}(\tilde{\mathbf{x}}^k)-\mathbf{x}^k_{i^k}\|_2^2;
	%\end{array}
	\label{eq:final0_multi}\vspace{-0.2cm}
	\end{align}
and
\begin{align}
	&\nonumber\tilde{F}(\mathbf{x}^{k+T}\ldots,\mathbf{x}^{k+T-\delta})%\\
	\leq \tilde{F}(\mathbf{x}^{k+T-1},\ldots,\mathbf{x}^{k+T-1-\delta})\\\nonumber&\quad-\gamma\,\left(c_{\tilde{f}}-\gamma\,\left(L_f+\frac{\delta^2L_f}{2}\right)\right)\,\|\hat{\mathbf{x}}_{i^{k+T-1}}(\tilde{\mathbf{x}}^{k+T-1})-\mathbf{x}^{k+T-1}_{i^{k+T-1}}\|_2^2\\
	&\leq\tilde{F}(\mathbf{x}^k,\ldots,\mathbf{x}^{k-\delta})-\gamma\left(c_{\tilde{f}}-\gamma\,\left(L_f+\frac{\delta^2L_f}{2}\right)\right)\,\sum\limits_{t=k}^{k+T-1}\|\hat{\mathbf{x}}_{i^t}(\tilde{\mathbf{x}}^t)-\mathbf{x}^t_{i^t}\|_2^2.
	\end{align}
Taking conditional expectation both sides we have that the following holds a.s.:
\begin{align}
&\nonumber	\mathbb{E}\left(\tilde{F}(\underline{\mathbf{x}}^{k+T}\ldots,\underline{\mathbf{x}}^{k+T-\delta})|\mathcal{F}^{k-1}\right)\leq\tilde{F}(\underline{\mathbf{x}}^k,\ldots,\underline{\mathbf{x}}^{k-\delta})\\&-\gamma\left(c_{\tilde{f}}-\gamma\,\left(L_f+\frac{\delta^2L_f}{2}\right)\right)\,\sum\limits_{t=k}^{k+T-1}\mathbb{E}\left(\|\hat{\mathbf{x}}_{\underline{i}^t}(\underline{\tilde{\mathbf{x}}}^t)-\mathbf{x}^t_{\underline{i}^t}\|_2^2|\mathcal{F}^{t-1}\right).
\label{eq:final0_multi2}
\end{align}
	Using \eqref{eq:max gamma},  \eqref{eq:final0_multi2}, A5, and the Martingale's theorem \cite{robbins1985convergence}, we deduce that i) $\{\tilde{F}(\underline{\mathbf{x}}^k,\ldots,\underline{\mathbf{x}}^{k-\delta})\}_{k\in\mathbb{N}_+}$, and thus $\{{F}(\underline{\mathbf{x}}^k,\ldots,\underline{\mathbf{x}}^{k-\delta})\}_{k\in\mathbb{N}_+}$ converge a.s., ii) $\{\underline{\mathbf{x}}^k\}_{k\in\mathbb{N}_+}$ is bounded on $\mathcal{X}$ a.s., and iii)
	\begin{equation}
		\lim\limits_{k\to+\infty}\sum\limits_{t=k}^{k+T-1}\mathbb{E}\left(\|\hat{\mathbf{x}}_{\underline{i}^t}(\underline{\tilde{\mathbf{x}}}^t)-\underline{\mathbf{x}}_{\underline{i}^t}^t\|_2|\mathcal{F}^{t-1}\right)=0,\quad\text{ a.s.}
		\label{limit}
	\end{equation}
	From \eqref{limit}, it follows that there exists a set $\bar{\Omega}\subseteq\Omega$, with $\mathbb{P}(\bar{\Omega})=1$, such that for any $\omega\in\bar{\Omega}$,  \vspace{-0.2cm}
	\begin{align}
		\nonumber&\sum\limits_{t=k}^{k+T-1}\mathbb{E}\left(\|\hat{\mathbf{x}}_{\underline{i}^t}(\underline{\tilde{\mathbf{x}}}^t)-\mathbf{x}^t_{\underline{i}^t}\|_2^2|\mathcal{F}^{t-1}\right)\\&\nonumber\stackrel{(a)}{=}\sum\limits_{t=k}^{k+T-1}\sum\limits_{(i,\mathbf{d})\in\mathcal{N}\times\mathcal{D}}p\left((i,\mathbf{d})|\boldsymbol{\omega}^{0:t-1}\right)\,\|\hat{\mathbf{x}}_i(\tilde{\mathbf{x}}^t)-\mathbf{x}^t_i\|_2\\
		&\stackrel{(b)}{\geq}p_{\text{min}}\sum\limits_{i=1}^N\|\hat{\mathbf{x}}_i(\tilde{\mathbf{x}}^{k+t_k(i)})-\mathbf{x}_i^{k+t_k(i)}\|_2,
		\label{limit2}
	\end{align}
	where in (a) we used \eqref{eq:expected value}; and in (b) we used C2 and defined $t_k(i)\triangleq\min\{t\in[0;T]|p(i|\omega^{0:t+k-1})\geq p_{\text{min}}\}$. We also have:\vspace{-0.2cm}
	\begin{align}
		\nonumber &\|\hat{\mathbf{x}}(\mathbf{x}^k)-\mathbf{x}^k\|_2 \leq\sum\limits_{i=1}^N\|\hat{\mathbf{x}}_i(\mathbf{x}^k)-\mathbf{x}_i^k\|_2\\
		\nonumber&\stackrel{(a)}{\leq}\sum\limits_{i=1}^N\left(\|\hat{\mathbf{x}}_i(\tilde{\mathbf{x}}^{k+t_k(i)})-\mathbf{x}_i^{k+t_k(i)}\|_2+(1+L_{\hat{\mathbf{x}}})\,\|\tilde{\mathbf{x}}^{k+t_k(i)}-\mathbf{x}^k\|_2\right)\\
		\nonumber&\stackrel{(b)}{\leq}\sum\limits_{i=1}^N\Biggl(\|\hat{\mathbf{x}}_i(\tilde{\mathbf{x}}^{k+t_k(i)})-\mathbf{x}_i^{k+t_k(i)}\|_2+(1+L_{\hat{\mathbf{x}}})\biggl(\|\mathbf{x}^{k+t_k(i)}-\mathbf{x}^k\|_2 \\&\nonumber\quad+\sum\limits_{l=k+t_k(i)-\delta}^{k+t_k(i)-1}\|\mathbf{x}^{l+1}-\mathbf{x}^l\|_2\biggr)\Biggr)\\
		&\stackrel{(c)}{\leq}\sum\limits_{i=1}^N\left(\|\hat{\mathbf{x}}_i(\tilde{\mathbf{x}}^{k+t_k(i)})-\mathbf{x}_i^{k+t_k(i)}\|_2+2\gamma(1+L_{\hat{\mathbf{x}}})\sum\limits_{l=k-\delta}^{k+T-1}\|\hat{\mathbf{x}}_{i^l}(\tilde{\mathbf{x}}^l)-\mathbf{x}_{i^l}^l\|_2\right),
		\label{final}
	\end{align}
	where in (a) we used Proposition \ref{Prop_best_response}; (b) comes from \eqref{x_tilde}; and (c) from the updating rule of the algorithm.
	We deduce from  \eqref{limit}, \eqref{limit2}, and \eqref{final},  that \vspace{-0.1cm}	\begin{equation}
		\lim\limits_{k\to+\infty}\|\hat{\mathbf{x}}(\mathbf{x}^k)-\mathbf{x}^k\|_2=0.
		\label{finallimit}\vspace{-0.2cm}
	\end{equation}
	Since the sequence $\{\mathbf{x}^k\}_{k\in\mathbb{N}_+}$ is bounded, it has at least one limit point $\bar{\mathbf{x}}$ that belongs to $\mathcal{X}$. By the continuity of $\hat{\mathbf{x}}(\cdot)$ (see Proposition \ref{Prop_best_response}) and \eqref{finallimit}, it must be $\hat{\mathbf{x}}(\bar{\mathbf{x}})=\bar{\mathbf{x}}$, and thus by  Proposition \ref{Prop_best_response}  $\bar{\mathbf{x}}$ is a stationary solution of Problem~\eqref{eq:optimization_problem}.  Since \eqref{finallimit} holds for any $\omega\in\bar{\Omega}$, the previous results hold a.s..
	
	Let us now define:\vspace{-0.1cm}
	\begin{equation}
	\hat{\mathbf{y}}_i(\mathbf{x}^k)=\underset{\mathbf{y}_i\in\mathcal{X}_i} {\text{argmin}}\left\{\nabla_{\mathbf{x}_i}f(\mathbf{x}^k)^\text{T}(\mathbf{y}_i-\mathbf{x}_i^k)+g_i(\mathbf{y}_i)+\frac{1}{2}\,\|\mathbf{y}_i-\mathbf{x}_i^k\|^2_2\right\},
	\label{34}
	\end{equation}
	and note that $M_F(\mathbf{x})=[\mathbf{x}_1^k-\hat{\mathbf{y}}_1(\mathbf{x}^k),\ldots,\mathbf{x}_N^k-\hat{\mathbf{y}}_N(\mathbf{x}^k)]^\text{T}$. It is easy to check that $\hat{\mathbf{y}}(\cdot)$ is $L_{\hat{\mathbf{y}}}$-Lipschitz continuous on $\mathcal{X}$, with $L_{\hat{\mathbf{y}}}\triangleq L_f+1$. Fix a realization $\omega\in\Omega$.   The  optimality  of $\hat{\mathbf{y}}_{i^k}(\mathbf{x}^k)$ along with the convexity of $g_{i^k}$, leads
	\begin{align}
	&\left(\nabla_{\mathbf{x}_{i^k}} f(\mathbf{x}^k) + \hat{\mathbf{y}}_{i^k}(\mathbf{x}^k)- \mathbf{x}^k_{i^k}\right)^\text{T}\left(\hat{\mathbf{x}}_{i^k}(\tilde{\mathbf{x}}^k) - \hat{\mathbf{y}}_{i^k}(\mathbf{x}^k)\right)\\&\quad + g_{i^k}(\hat{\mathbf{x}}_{i^k}(\tilde{\mathbf{x}}^k)) - g_{i^k}(\hat{\mathbf{y}}_{i^k}(\mathbf{x}^k)) \geq 0.
	\label{eq:t1}
	\end{align}
	Similarly,  one can write  for $\hat{\mathbf{x}}_{i^k}(\tilde{\mathbf{x}}^k)$: \vspace{-0.1cm}
	\begin{equation}
	\nabla \tilde{f}_{i^k}(\hat{\mathbf{x}}_{i^k}(\tilde{\mathbf{x}}^k);\tilde{\mathbf{x}}^k)^\text{T}\left(\hat{\mathbf{y}}_{i^k}(\mathbf{x}^k) - \hat{\mathbf{x}}_{i^k}(\tilde{\mathbf{x}}^k)\right) + g_{i^k}(\hat{\mathbf{y}}_{i^k}(\mathbf{x}^k)) - g_{i^k}(\hat{\mathbf{x}}_{i^k}(\tilde{\mathbf{x}}^k))) \geq 0.
	\label{eq:t2}
	\end{equation}
	 Summing \eqref{eq:t1} and \eqref{eq:t2}, adding and subtracting $\hat{\mathbf{x}}_{i^k}(\tilde{\mathbf{x}}^k)$,  and % together setting $\mathbf{w}_{i^k} = \hat{\mathbf{y}}_{i^k}(\mathbf{x}^k)$ and $\mathbf{z}_{i^k} = \hat{\mathbf{x}}_{i^k}(\tilde{\mathbf{x}}^k)$:
	%\begin{equation}
	%(\nabla_{\mathbf{x}_{i^k}} f(\mathbf{x}^k) - \nabla \tilde{f}_{i^k}(\hat{\mathbf{x}}_{i^k}(\tilde{\mathbf{x}}^k);\tilde{\mathbf{x}}^k) + \hat{\mathbf{y}}_{i^k}(\mathbf{x}^k) - \mathbf{x}^k_{i^k} )^\text{T}(\hat{\mathbf{x}}_{i^k}(\tilde{\mathbf{x}}^k) - \hat{\mathbf{y}}_{i^k}(\mathbf{x}^k))  \geq 0.
	%\label{eq:t3}
	%\end{equation}
	%Summing and subtracting $\hat{\mathbf{x}}_{i^k}(\tilde{\mathbf{x}}^k)$ inside the first parenthesis on the left hand side
	  using the gradient consistency B2, yield \vspace{-0.2cm}
	\begin{align}
	&\nonumber\left(\nabla \tilde{f}_{i^k}(\mathbf{x}^k_{i^k};\mathbf{x}^k) - \nabla \tilde{f}_{i^k}(\hat{\mathbf{x}}_{i^k}(\tilde{\mathbf{x}}^k);\tilde{\mathbf{x}}^k) + \hat{\mathbf{x}}_{i^k}(\tilde{\mathbf{x}}^k) - \mathbf{x}^k_{i^k}\right)^\text{T}\\&\left(\hat{\mathbf{x}}_{i^k}(\tilde{\mathbf{x}}^k) - \hat{\mathbf{y}}_{i^k}(\mathbf{x}^k)\right)
	\geq \|\hat{\mathbf{x}}_{i^k}(\tilde{\mathbf{x}}^k) - \hat{\mathbf{y}}_{i^k}(\mathbf{x}^k)\|_2^2.
	\label{eq:t4}
	\end{align}
	%Applying Cauchy-Schwartz inequality to upper-bound the left hand side of \eqref{eq:t4} we obtain:
	%\begin{equation}
	%\|\nabla \tilde{f}_{i^k}(\mathbf{x}^k_{i^k};\mathbf{x}^k) - \nabla \tilde{f}_{i^k}(\hat{\mathbf{x}}_{i^k}(\tilde{\mathbf{x}}^k);\tilde{\mathbf{x}}^k) + \hat{\mathbf{x}}_{i^k}(\tilde{\mathbf{x}}^k) - \mathbf{x}^k_{i^k}\|_2 \geq \|\hat{\mathbf{x}}_{i^k}(\tilde{\mathbf{x}}^k) - \hat{\mathbf{y}}_{i^k}(\mathbf{x}^k)\|_2.
	%\label{eq:t5}
	%\end{equation}
	Summing and subtracting $\nabla \tilde{f}_{i^k}(\hat{\mathbf{x}}_{i^k}(\tilde{\mathbf{x}}^k);\mathbf{x}^k)$ and using  the triangular inequality, the LHS of \eqref{eq:t4} can be upper bounded as
	%\begin{equation}
	%\def\arraystretch{2}\begin{array}{l}
	\begin{align}	
	&	\nonumber \|\nabla \tilde{f}_{i^k}(\hat{\mathbf{x}}_{i^k}(\tilde{\mathbf{x}}^k);\mathbf{x}^k) - \nabla \tilde{f}_{i^k}(\hat{\mathbf{x}}_{i^k}(\tilde{\mathbf{x}}^k);\tilde{\mathbf{x}}^k)\|_2\\ & \label{eq:t6}\quad+ \|\nabla \tilde{f}_{i^k}(\mathbf{x}^k_{i^k};\mathbf{x}^k) - \nabla \tilde{f}_{i^k}(\hat{\mathbf{x}}_{i^k}(\tilde{\mathbf{x}}^k);\mathbf{x}^k)\|_2		
	+ \|\hat{\mathbf{x}}_{i^k}(\tilde{\mathbf{x}}^k) - \mathbf{x}^k_{i^k}\|_2
	\\&\nonumber\geq \|\hat{\mathbf{x}}_{i^k}(\tilde{\mathbf{x}}^k) -
	\hat{\mathbf{y}}_{i^k}(\mathbf{x}^k)\|_2.	\end{align}
	We can further upper-bound the left hand side invoking    B3 and B4, and write:
	\begin{equation}
	\|\hat{\mathbf{x}}_{i^k}(\tilde{\mathbf{x}}^k) - \hat{\mathbf{y}}_{i^k}(\mathbf{x}^k)\|_2 \leq (1+L_E)\,\|\hat{\mathbf{x}}_{i^k}(\tilde{\mathbf{x}}^k) - \mathbf{x}^k_{i^k}\|_2 +L_B\,\| \mathbf{x}^k-\tilde{\mathbf{x}}^k\|_2\, .
	\label{eq:t7}
	\end{equation}
	Finally, squaring  both sides, we get
	\begin{align}
	&\nonumber
	\|\hat{\mathbf{x}}_{i^k}(\tilde{\mathbf{x}}^k) - \hat{\mathbf{y}}_{i^k}(\mathbf{x}^k)\|_2^2\;		
	\leq\; (1+L_E)^2\,\|\hat{\mathbf{x}}_{i^k}(\tilde{\mathbf{x}}^k) - \mathbf{x}^k_{i^k}\|_2^2 + L_B^2\,\| \mathbf{x}^k-\tilde{\mathbf{x}}^k \|_2^2\\
	&+ 2L_B(1+L_E)\,\|\hat{\mathbf{x}}_{i^k}(\tilde{\mathbf{x}}^k) - \mathbf{x}^k_{i^k}\|_2\,\| \mathbf{x}^k-\tilde{\mathbf{x}}^k\|_2.
	\label{eq:t8}
	\end{align}
	We bound next the term  $\|\mathbf{x}_{i^k}^k - \hat{\mathbf{y}}_{i^k}(\mathbf{x}^k)\|_2^2$. We write
	\begin{align}
	&\nonumber\|\mathbf{x}_{i^k}^k - \hat{\mathbf{y}}_{i^k}(\mathbf{x}^k)\|_2^2
	= \|\mathbf{x}_{i^k}^k - \hat{\mathbf{x}}_{i^k}(\tilde{\mathbf{x}}^k) +\hat{\mathbf{x}}_{i^k}(\tilde{\mathbf{x}}^k)- \hat{\mathbf{y}}_{i^k}(\mathbf{x}^k)\|_2^2 \\
	&	\nonumber
	\leq \;2\,\left(\|\hat{\mathbf{x}}_{i^k}(\tilde{\mathbf{x}}^k) - \mathbf{x}_{i^k}^k\|_2^2 + \|\hat{\mathbf{x}}_{i^k}(\tilde{\mathbf{x}}^k)- \hat{\mathbf{y}}_{i^k}(\mathbf{x}^k)\|_2^2\right)
	\\&\nonumber\stackrel{(a)}{\leq} \left(2+2(1+L_E)^2\right)\,\|\hat{\mathbf{x}}_{i^k}(\tilde{\mathbf{x}}^k) - \mathbf{x}_{i^k}^k\|_2^2+ 2L_B^2\,\| \mathbf{x}^k-\tilde{\mathbf{x}}^k \|_2^2\\
	&\nonumber	\quad 	 + 4L_B(1+L_E)\,\|\hat{\mathbf{x}}_{i^k}(\tilde{\mathbf{x}}^k) - \mathbf{x}^k_{i^k}\|_2\,\| \mathbf{x}^k-\tilde{\mathbf{x}}^k \|_2\\
	&\nonumber
	\stackrel{(b)}{\leq} 2\left(1+(1+L_E)(1+L_B+L_E)\right)\,\|\hat{\mathbf{x}}_{i^k}(\tilde{\mathbf{x}}^k)-\mathbf{x}^k_{i^k}\|_2^2\\
	&\label{eq:t9}\quad+2L_B(1+L_B+L_E)\,\|\mathbf{x}^k-\tilde{\mathbf{x}}^k\|_2^2,
	\end{align}
	where (a) comes from \eqref{eq:t8}; and (b) follows from the Young's inequality.
	Note that \vspace{-0.2cm}
	\begin{align}
	&\nonumber\|\mathbf{x}^k-\tilde{\mathbf{x}}^k\|_2^2\,
	=\,\sum\limits_{i=1}^N\|\mathbf{x}^k_i-\tilde{\mathbf{x}}^k_i\|_2^2\,
	\stackrel{(a)}{\leq}\,\sum\limits_{i=1}^N\left(\sum\limits_{l\in\bar{\mathcal{K}}_i^k}\|\mathbf{x}^{l+1}-\mathbf{x}^l\|_2\right)^2\\
	 & \stackrel{(b)}{\leq}\sum\limits_{i=1}^N\bar{M}_i^k\sum\limits_{l\in\bar{\mathcal{K}}_i^k}\|\mathbf{x}^{l+1}-\mathbf{x}^l\|_2^2\,
	=\,\gamma^2\sum\limits_{i=1}^N\bar{M}_i^k\sum\limits_{l\in\bar{\mathcal{K}}_i^k}\|\hat{\mathbf{x}}_{i^l}(\tilde{\mathbf{x}}^l)-\mathbf{x}_{i^l}^l\|_2^2,
	\label{bound}	
	\end{align}
	where (a)  comes from \eqref{x_tilde}; and in (b) we used the Jensen's inequality  {and defined $\bar{M}_i^k\triangleq|\bar{\mathcal{K}}_i^k|$}.
	Combining \eqref{eq:t9} and \eqref{bound}, we get:
	%\begin{equation}
	%\begin{array}{l}
	\begin{align}	\nonumber
	\|\mathbf{x}_{i^k}^k &- \hat{\mathbf{y}}_{i^k}(\mathbf{x}^k)\|_2^2\,
	\leq\, 2\left(1+(1+L_E)(1+L_B+L_E)\right)\,\|\hat{\mathbf{x}}_{i^k}(\tilde{\mathbf{x}}^k)-\mathbf{x}^k_{i^k}\|_2^2\\ &+2\gamma^2L_B(1+L_B+L_E)\sum\limits_{i=1}^N\bar{M}_i^k\sum\limits_{l\in\bar{\mathcal{K}}_i^k}\|\hat{\mathbf{x}}_{i^l}(\tilde{\mathbf{x}}^l)-\mathbf{x}_{i^l}^l\|_2^2.
	%\end{array}
	\label{bound2}
	%\end{equation}
	\end{align}	
	We take now  the conditional expectation of the term on the LHS of \eqref{bound2}, and obtain
	\begin{align}
	&
	\nonumber
	\sum\limits_{t=k}^{k+T}\mathbb{E}\left(	\|\mathbf{x}_{\underline{i}^t}^t - \hat{\mathbf{y}}_{\underline{i}^t}(\mathbf{x}^t)\|_2^2|\mathcal{F}^{t-1}\right)(\omega)\,
	\stackrel{(a)}{=}\,\sum\limits_{t=k}^{k+T}\sum\limits_{i=1}^Np(i|\omega^{0:t-1})\,\|\mathbf{x}_i^t - \hat{\mathbf{y}}_i(\mathbf{x}^t)\|_2^2\\
	& \	\label{new_bound}\stackrel{(b)}{\geq}\sum\limits_{i=1}^Np_{\text{min}}\,\|\mathbf{x}_i^{k+t_k(i)}-\hat{\mathbf{y}}_i(\mathbf{x}^{k+t_k(i)})\|_2^2\\ &
	\nonumber
	\stackrel{(c)}{\geq} p_{\text{min}}\sum\limits_{i=1}^N\left(\|\mathbf{x}_i^k-\hat{\mathbf{y}}_i(\mathbf{x}^k)\|_2-\|\mathbf{x}_i^{k+t_k(i)}-\hat{\mathbf{y}}_i(\mathbf{x}^{k+t_k(i)})-\mathbf{x}_i^k+\hat{\mathbf{y}}_i(\mathbf{x}^k)\|_2\right)^2\\&
	\nonumber
	\geq p_{\text{min}}\sum\limits_{i=1}^N\Bigl(\|\mathbf{x}_i^k-\hat{\mathbf{y}}_i(\mathbf{x}^k)\|_2^2
	\\&\nonumber\quad-2\|\mathbf{x}_i^k-\hat{\mathbf{y}}_i(\mathbf{x}^k)\|_2\,\|\mathbf{x}_i^{k+t_k(i)} -\hat{\mathbf{y}}_i(\mathbf{x}^{k+t_k(i)})-\mathbf{x}_i^k+\hat{\mathbf{y}}_i(\mathbf{x}^k)\|_2\Bigr),
	\end{align}
	where in (a) we used \eqref{eq:expected value};   (b) follows from  C2; and in (c) we used the reverse triangle inequality. By \eqref{new_bound} and \eqref{bound2}, we obtain:\vspace{-0.2cm}
	\begin{align}\nonumber &
	p_{\text{min}}\sum\limits_{i=1}^N\|\mathbf{x}_i^k-\hat{\mathbf{y}}_i(\mathbf{x}^k)\|_2^2\,
	=\,	p_{\text{min}}\,\|M_F(\mathbf{x}^k)\|^2_2\\\nonumber &
	\leq\sum\limits_{t=k}^{k+T}\Biggr(2\left(1+(1+L_E)(1+L_B+L_E)\right)\,\mathbb{E}\left(\|\hat{\mathbf{x}}_{\underline{i}^t}(\underline{\tilde{\mathbf{x}}}^t)-\mathbf{x}^t_{\underline{i}^t}\|_2^2|\mathcal{F}^{t-1}\right)(\omega)\\
	\nonumber &
	\quad+2\gamma^2L_B(1+L_B+L_E)\sum\limits_{i=1}^N\bar{M}_i^t\sum\limits_{l\in\bar{\mathcal{K}}_i^t}\|\hat{\mathbf{x}}_{i^l}(\tilde{\mathbf{x}}^l)-\mathbf{x}_{i^l}^l\|_2^2\Biggl)\\
	\nonumber &
	\quad+2p_{\text{min}}\sum\limits_{i=1}^N\|\mathbf{x}_i^k-\hat{\mathbf{y}}_i(\mathbf{x}^k)\|_2\,\|\mathbf{x}_i^{k+t_k(i)}-\hat{\mathbf{y}}_i(\mathbf{x}^{k+t_k(i)})-\mathbf{x}_i^k+\hat{\mathbf{y}}_i(\mathbf{x}^k)\|_2\\
	\nonumber &
	\stackrel{(a)}{\leq}2\left(1+(1+L_E)(1+L_B+L_E)\right)\,\sum\limits_{t=k}^{k+T}\mathbb{E}\left(\|\hat{\mathbf{x}}_{\underline{i}^t}(\underline{\tilde{\mathbf{x}}}^t)-\mathbf{x}^t_{\underline{i}^t}\|_2^2|\mathcal{F}^{t-1}\right)(\omega)\\
	\nonumber &
	\quad+2T\gamma^2L_B(1+L_B+L_E)\,\sum\limits_{i=1}^NM_i^k\sum\limits_{l\in\mathcal{K}_i^k}\|\hat{\mathbf{x}}_{i^l}(\tilde{\mathbf{x}}^l)-\mathbf{x}_{i^l}^l\|_2^2
	+p_{\text{min}}\alpha\,\|M_F(\mathbf{x}^k)\|_2^2\\
	\nonumber &\quad+p_{\text{min}}\alpha^{-1}\sum\limits_{i=1}^N\|\mathbf{x}_i^{k+t_k(i)}-\hat{\mathbf{y}}_i(\mathbf{x}^{k+t_k(i)})-\mathbf{x}_i^k+\hat{\mathbf{y}}_i(\mathbf{x}^k)\|_2^2\\
	\nonumber &
	\stackrel{(b)}{\leq}2\left(1+(1+L_E)(1+L_B+L_E)\right)\sum\limits_{t=k}^{k+T}\mathbb{E}\left(\|\hat{\mathbf{x}}_{\underline{i}^t}(\underline{\tilde{\mathbf{x}}}^t)-\mathbf{x}^t_{\underline{i}^t}\|_2^2|\mathcal{F}^{t-1}\right)(\omega)\\
	\nonumber &
	\quad+2T\gamma^2L_B(1+L_B+L_E)\sum\limits_{i=1}^NM_i^k\sum\limits_{l\in\mathcal{K}_i^k}\|\hat{\mathbf{x}}_{i^l}(\tilde{\mathbf{x}}^l)-\mathbf{x}_{i^l}^l\|_2^2
	+p_{\text{min}}\alpha\,\|M_F(\mathbf{x}^k)\|_2^2\\
	\nonumber &\quad
	+2p_{\text{min}}\alpha^{-1}\sum\limits_{i=1}^N\left(\|\mathbf{x}^{k+t_k(i)}_i-\mathbf{x}_i^k\|_2^2+\|\hat{\mathbf{y}}_i(\mathbf{x}^{k+t_k(i)})-\hat{\mathbf{y}}_i(\mathbf{x}^k)\|_2^2\right)\\
	\nonumber &
	\stackrel{(c)}{\leq}2\left(1+(1+L_E)(1+L_B+L_E)\right)\sum\limits_{t=k}^{k+T}\mathbb{E}\left(\|\hat{\mathbf{x}}_{\underline{i}^t}(\underline{\tilde{\mathbf{x}}}^t)-\mathbf{x}^t_{\underline{i}^t}\|_2^2|\mathcal{F}^{t-1}\right)(\omega)\\
	\nonumber &
	\quad+2T\gamma^2L_B(1+L_B+L_E)\sum\limits_{i=1}^NM_i^k\sum\limits_{l\in\mathcal{K}_i^k}\|\hat{\mathbf{x}}_{i^l}(\tilde{\mathbf{x}}^l)-\mathbf{x}_{i^l}^l\|_2^2
	+p_{\text{min}}\alpha\,\|M_F(\mathbf{x}^k)\|_2^2\\
	\nonumber &
	\quad+2\gamma^2p_{\text{min}}\alpha^{-1}(1+L_{\hat{\mathbf{y}}}^2)\sum\limits_{i=1}^N\sum\limits_{l=k}^{k+t_k(i)-1}\|\hat{\mathbf{x}}_{i^l}(\tilde{\mathbf{x}}^l)-\mathbf{x}^l_{i^l}\|_2^2
	\\
	\nonumber &
	\leq2\left(1+(1+L_E)(1+L_B+L_E)\right)\sum\limits_{t=k}^{k+T}\mathbb{E}\left(\|\hat{\mathbf{x}}_{\underline{i}^t}(\underline{\tilde{\mathbf{x}}}^t)-\mathbf{x}^t_{\underline{i}^t}\|_2^2|\mathcal{F}^{t-1}\right)(\omega)\\
	\nonumber &
	\quad+2T\gamma^2L_B(1+L_B+L_E)\sum\limits_{i=1}^NM_i^k\sum\limits_{l\in\mathcal{K}_i^k}\|\hat{\mathbf{x}}_{i^l}(\tilde{\mathbf{x}}^l)-\mathbf{x}_{i^l}^l\|_2^2
	+p_{\text{min}}\alpha\,\|M_F(\mathbf{x}^k)\|_2^2\\
	&\quad+2\gamma^2p_{\text{min}}\alpha^{-1}(1+L_{\hat{\mathbf{y}}}^2)N\sum\limits_{l=k}^{k+T-1}\|\hat{\mathbf{x}}_{i^l}(\tilde{\mathbf{x}}^l)-\mathbf{x}^l_{i^l}\|_2^2,
	\end{align}
	where in (a) we used the Young's inequality   {and the definition of $M_i^k$ (cf.  Section~\ref{sec:convergence})};  in (b) we used the triangle and  Jensen's inequalities; and (c) comes from the updating rule of the algorithm.
	Rearranging the terms and taking expectation of both sides,  we  get:
	\begin{align}
	&\nonumber
	\mathbb{E}\left(\|M_F(\underline{\mathbf{x}}^k)\|^2_2\right)\\
	&\nonumber
	\leq\frac{2\left(1+(1+L_E)(1+L_B+L_E)+\gamma^2Np_{\text{min}}\alpha^{-1}(1+L_{\hat{\mathbf{y}}}^2)\right)}{p_{\text{min}}-\alpha p_{\text{min}}}\\&\nonumber\sum\limits_{t=k}^{k+T}\mathbb{E}\left(\|\hat{\mathbf{x}}_{\underline{i}^t}(\underline{\tilde{\mathbf{x}}}^t)-\mathbf{x}^t_{\underline{i}^t}\|_2^2\right)\\
	&
	+\frac{2T\gamma^2 L_B(1+L_B+L_E)}{p_{\text{min}}-\alpha p_{\text{min}}}\,\mathbb{E}\left(\sum\limits_{i=1}^N\underline{M}_i^k\sum\limits_{t\in\underline{\mathcal{K}}_i^k}\|\hat{\mathbf{x}}_{\underline{i}^t}(\underline{\tilde{\mathbf{x}}}^t)-\mathbf{x}^t_{\underline{i}^t}\|_2^2\right).\label{final_iter}
	\end{align}
	 Invoking \eqref{eq:max gamma} and  \eqref{eq:final0_multi}, we can write
	 \begin{align}
	 &\nonumber
	 \|\hat{\mathbf{x}}_{i^k}(\tilde{\mathbf{x}}^k)-\mathbf{x}^k_{i^k}\|_2^2
	 \\&\leq\frac{1}{\gamma\left(c_{\tilde{f}}-\gamma\left(L_f+\frac{\delta^2L_f}{2}\right)\right)}\left(\tilde{F}(\mathbf{x}^k,\ldots,\mathbf{x}^{k-\delta})-\tilde{F}(\mathbf{x}^{k+1}\ldots,\mathbf{x}^{k+1-\delta})\right).
	 %	\end{array}
	 \end{align}
	 Using this bound in \eqref{final_iter}, we get \vspace{-0.1cm}
	 \begin{align}
	 \nonumber\mathbb{E}\left(\|M_F(\underline{\mathbf{x}}^k)\|_2^2\right)
	 \leq C_1\sum\limits_{t=k}^{k+T}\mathbb{E}\left(\tilde{F}(\underline{\mathbf{x}}^t,\ldots,\underline{\mathbf{x}}^{t-\delta})-\tilde{F}(\underline{\mathbf{x}}^{t+1},\ldots,\underline{\mathbf{x}}^{t+1-\delta})\right)\\
	 \quad+\gamma^2C_2\,\mathbb{E}\left(\sum\limits_{i=1}^N\underline{M}_i^k\sum\limits_{t\in\underline{\mathcal{K}}_i^k}\left(\tilde{F}(\underline{\mathbf{x}}^t,\ldots,\underline{\mathbf{x}}^{t-\delta})-\tilde{F}(\underline{\mathbf{x}}^{t+1},\ldots,\underline{\mathbf{x}}^{t+1-\delta})\right)\right).
	 \end{align}
	 Finally,\vspace{-0.2cm}
	 \begin{align}
	  \nonumber	K_\epsilon\epsilon & \leq\sum\limits_{k=0}^{K_\epsilon}\mathbb{E}\left(\|M_F(\underline{\mathbf{x}}^k)\|_2^2\right)\\
	 &\nonumber	\leq C_1\sum\limits_{k=0}^{K_\epsilon}\mathbb{E}\left(\tilde{F}(\underline{\mathbf{x}}^{k},\ldots,\underline{\mathbf{x}}^{k-\delta})-\tilde{F}(\underline{\mathbf{x}}^{k+T+1},\ldots,\underline{\mathbf{x}}^{k+T+1-\delta})\right)\\
	 &\nonumber	+\gamma^2C_2\,\mathbb{E}\left(\sum\limits_{i=1}^N\underline{M}_i^k\sum\limits_{t\in\underline{\mathcal{K}}_i^k}\left(\tilde{F}(\underline{\mathbf{x}}^t,\ldots,\underline{\mathbf{x}}^{t-\delta})-\tilde{F}(\underline{\mathbf{x}}^{t+1},\ldots,\underline{\mathbf{x}}^{t+1-\delta})\right)\right)\\
	 &\nonumber	\leq C_1(T+1)(F(\mathbf{x}^0)-F^*)\\&\quad +C_2\gamma^2\sum\limits_{k=0}^{K_\epsilon}\mathbb{E}\left(\sum\limits_{i=1}^N\underline{M}_i^k\sum\limits_{t\in\underline{\mathcal{K}}_i^k}\left(\tilde{F}(\underline{\mathbf{x}}^t,\ldots,\underline{\mathbf{x}}^{t-\delta})-\tilde{F}(\underline{\mathbf{x}}^{t+1},\ldots,\underline{\mathbf{x}}^{t+1-\delta})\right)\right).
	 \label{51}
	 \end{align}
This completes the proof.\vspace{-0.4cm}
\subsection{Proof of Theorem \ref{th:ncc}}\label{proof_theorem_2}\vspace{-0.2cm}
In this section, the best-response map $\hat{\mathbf{x}}(\cdot)$  is the one defined in \eqref{ncc_2}.

Statement (ii) of the theorem follow readily from the feasibility of  $\mathbf{x}^0\in\mathcal{K}$ and the fact that $\mathbf{x}^{k+1}_{i^k}=\mathbf{x}^k_{i^k}+\gamma(\hat{\mathbf{x}}_{i^k}(\tilde{\mathbf{x}}^k)-\mathbf{x}^k_{i^k})$ is a convex combinations of points in $\mathcal{K}_{i^k}(\mathbf{x}^k_{i^k})$.  %the iterate feasibility can be easily proved by considering the fact that each point $\mathbf{x}^k$, with $k\geq0$, generated by the algorithm, is a convex combination between the previous point and $\hat{\mathbf{x}}(\tilde{\mathbf{x}}^k)$, which lies in a convex subset of the original feasible set.

To prove statement (ii), let us fix a  realization $\omega\in\Omega$. Following the steps from \eqref{descent} to \eqref{eq:final0_multi2}, one can  prove that the following holds a.s.:
\begin{align}
&\nonumber	\mathbb{E}\left(\tilde{F}(\underline{\mathbf{x}}^{k+T}\ldots,\underline{\mathbf{x}}^{k+T-\delta})|\mathcal{F}^{k-1}\right)\leq\tilde{F}(\underline{\mathbf{x}}^k,\ldots,\underline{\mathbf{x}}^{k-\delta})\\&-\gamma\left(c_{\tilde{f}}-\gamma\left(L_f+\frac{\delta^2L_f}{2}\right)\right)\sum\limits_{t=k}^{k+T-1}\mathbb{E}\left(\|\hat{\mathbf{x}}_{\underline{i}^t}(\underline{\tilde{\mathbf{x}}}^t)-\mathbf{x}^t_{\underline{i}^t}\|_2^2|\mathcal{F}^{t-1}\right).
\label{eq:final0_multi2_ncc}
\end{align}

Using \eqref{eq:max gamma},   \eqref{eq:final0_multi2_ncc}    and A5', we deduce  that i) %$\{\tilde{F}(\underline{\mathbf{x}}^k,\ldots,\underline{\mathbf{x}}^{k-\delta})\}_{k\in\mathbb{N}_+}$ and
$\left\{{F}(\underline{\mathbf{x}}^k,\ldots,\underline{\mathbf{x}}^{k-\delta})\right\}_{k\in\mathbb{N}_+}$ converges a.s., and ii)
\begin{equation}
\lim\limits_{k\to+\infty}\sum\limits_{t=k}^{k+T-1}\mathbb{E}\left(\|\hat{\mathbf{x}}_{\underline{i}^t}(\underline{\tilde{\mathbf{x}}}^t)-\underline{\mathbf{x}}_{\underline{i}^t}^t\|_2|\mathcal{F}^{t-1}\right)=0\quad\text{a.s.}
\label{limit_ncc}
\end{equation}
It follows from  \eqref{limit_ncc} and C2 that
\begin{align}
\lim\limits_{k\to+\infty}\sum\limits_{i=1}^N\|\hat{\mathbf{x}}_i(\tilde{\mathbf{x}}^{k+t_k(i)})-\mathbf{x}_i^{k+t_k(i)}\|_2=0\quad\text{ a.s.}
\label{limit2_ncc}
\end{align}
Therefore,   there exists a set $\bar{\Omega}\subseteq\Omega$, with $\mathbb{P}(\bar{\Omega})=1$, such that, for any $\omega\in\bar{\Omega}$,
\begin{align}
\nonumber &\|\hat{\mathbf{x}}(\mathbf{x}^k)-\mathbf{x}^k\|_2\,\leq\,\sum\limits_{i=1}^N\|\hat{\mathbf{x}}_i(\mathbf{x}^k)
-\mathbf{x}_i^k\|_2\,
%\\\nonumber&
\stackrel{(a)}{\leq}\,\sum\limits_{i=1}^N\biggl(\|\hat{\mathbf{x}}_i(\tilde{\mathbf{x}}^{k+t_k(i)})-\mathbf{x}_i^{k+t_k(i)}\|_2\\&\nonumber\quad+\|\tilde{\mathbf{x}}^{k+t_k(i)}-\mathbf{x}^k\|_2^{1/2}\,\left(\|\tilde{\mathbf{x}}^{k+t_k(i)}-\mathbf{x}^k\|_2^{1/2}+\tilde{L}_{\hat{\mathbf{x}}}\biggr)\right)\\
\nonumber&\stackrel{(b)}{\leq}\sum\limits_{i=1}^N\Biggl(\|\hat{\mathbf{x}}_i(\tilde{\mathbf{x}}^{k+t_k(i)})-\mathbf{x}_i^{k+t_k(i)}\|_2+\biggl(\|\mathbf{x}^{k+t_k(i)}-\mathbf{x}^k\|_2^{1/2}\\&\nonumber\quad+\sum\limits_{l=k+t_k(i)-\delta}^{k+t_k(i)-1}\|\mathbf{x}^{l+1}-\mathbf{x}^l\|_2^{1/2}\biggr)\\&\nonumber\quad\biggl(\|\mathbf{x}^{k+t_k(i)}-\mathbf{x}^k\|_2^{1/2}+\sum\limits_{l=k+t_k(i)-\delta}^{k+t_k(i)-1}\|\mathbf{x}^{l+1}-\mathbf{x}^l\|_2^{1/2}+\tilde{L}_{\hat{\mathbf{x}}}\biggr)\Biggr)\nonumber\\
&\stackrel{(c)}{\leq}\sum\limits_{i=1}^N\Biggl(\|\hat{\mathbf{x}}_i(\tilde{\mathbf{x}}^{k+t_k(i)})-\mathbf{x}_i^{k+t_k(i)}\|_2\nonumber\\ &\quad +2\sqrt{\gamma}\sum\limits_{l=k-\delta}^{k+T-1}\|\hat{\mathbf{x}}_{i^l}(\tilde{\mathbf{x}}^l)-\mathbf{x}_{i^l}^l\|_2^{1/2}\,\left(2\sqrt{\gamma}\sum\limits_{l=k-\delta}^{k+T-1}\|\hat{\mathbf{x}}_{i^l}(\tilde{\mathbf{x}}^l)-\mathbf{x}_{i^l}^l\|_2^{1/2}+\tilde{L}_{\hat{\mathbf{x}}}\right)\Biggr),
\label{final_ncc}
\end{align}
where in (a) we used Proposition \ref{Prop_best_response_ncc}; (b) comes from \eqref{x_tilde}; and in (c) we used  the updating rule of the algorithm.
Using \eqref{limit_ncc}, \eqref{limit2_ncc} and \eqref{final_ncc}, we conclude that
\begin{equation}
\lim\limits_{k\to+\infty}\|\hat{\mathbf{x}}(\mathbf{x}^k)-\mathbf{x}^k\|_2=0.
\label{finallimit_ncc}
\end{equation}
A straightforward generalization of  \cite[Theorem 11]{scutari2014distributed} together with   \eqref{finallimit_ncc}  proves that every limit point of $\{\bx^k\}_{k\in \mathbb N_+}$ is  a stationary solution of Problem~\eqref{ncc_1}. Since \eqref{finallimit_ncc} holds for any given realization $\omega\in\bar{\Omega}$, the above results  hold a.s..

Iteration complexity   can be proved  following the steps \eqref{34}-\eqref{51} and using the convexification of the nonconvex constraint sets where needed; details are omitted.
\end{document}